\documentclass[12pt]{amsart}
\usepackage[T1]{fontenc}
\usepackage{amsmath, amssymb}
\usepackage[english]{babel}
\usepackage{mathrsfs}
\usepackage[all]{xy}
\usepackage{enumitem}
\usepackage{url}

\usepackage{tikz}

\usepackage[bookmarks=true, colorlinks=true, linkcolor=blue, citecolor= red,hypertexnames=false,hyperindex,breaklinks]{hyperref}
\usepackage[capitalize]{cleveref}

\newtheorem{thm}{Theorem}[section]
\newtheorem*{thm*}{Theorem}
\newtheorem{lem}[thm]{Lemma}
\newtheorem{fact}[thm]{Fact}
\newtheorem{facts}[thm]{Facts}
\newtheorem{prop}[thm]{Proposition}
\newtheorem*{prop*}{Proposition}

\newtheorem*{stat*}{Statement}

\newtheorem{cor}[thm]{Corollary}

\theoremstyle{definition}
\newtheorem{defn}[thm]{Definition}

\newtheorem{remark}[thm]{Remark}

\newtheorem{quest}[thm]{Question}

\def\e{\varepsilon}

\newcommand\ip[2]{\left\langle #1\, ,\!\ #2 \right\rangle}

\def \bbR{\mathbb{R}}
\def \Rb{\mathbb{R}}
\def \bbQ{\mathbb{Q}}

\def \Nb{\mathbb{N}}

\crefname{main}{Theorem}{Theorems}
\crefname{mconj}{Conjecture}{Conjectures}
\crefname{mcor}{Corollary}{Corollaries}
\crefname{thm}{Theorem}{Theorems}
\crefname{thm*}{Theorem}{Theorems}
\crefname{lem}{Lemma}{Lemmas}
\crefname{fact}{Fact}{Facts}
\crefname{facts}{Facts}{Factss}
\crefname{prop}{Proposition}{Propositions}
\crefname{prop*}{Proposition}{Propositions}
\crefname{stat}{Statement}{Statements}
\crefname{stat*}{Statement}{Statements}
\crefname{conj}{Conjecture}{Conjectures}
\crefname{cor}{Corollary}{Corollaries}
\crefname{princ}{Principle}{Principles}
\crefname{defn}{Definition}{Definitions}
\crefname{notation}{Notation}{Notations}
\crefname{remark}{Remark}{Remarks}
\crefname{remarks}{Remarks}{Remarks}
\crefname{question}{Question}{Questions}
\crefname{quest}{Question}{Questions}
\crefname{example}{Example}{Examples}
\crefname{claim}{Claim}{Claims}

\providecommand{\monus}{
  \mathbin{
    \vphantom{+}
    \text{
      \mathsurround=0pt
      \ooalign{
        \noalign{\kern-.35ex}
        \hidewidth$\smash{\cdot}$\hidewidth\cr
        \noalign{\kern.35ex}
        $-$\cr
      }
    }
  }
}

\newcommand{\ACN}{\mathrm{AC}_{\Nb}}
\newcommand{\ACNN}{\mathrm{AC}_{\Nb,\Nb}}
\newcommand{\ACNT}{\mathrm{AC}_{\Nb,2}}

\begin{document}


\title{Pointwise Mean Value Theorems in Constructive Mathematics}

\author{Jananan Arulseelan and James E. Hanson}
\address{Department of Mathematics, Iowa State University, 396 Carver Hall, 411 Morrill Road, Ames, IA 50011, USA}
\email{jananan@iastate.edu}
\urladdr{https://sites.google.com/view/jananan-arulseelan}

\address{Department of Mathematics, Iowa State University, 396 Carver Hall, 411 Morrill Road, Ames, IA 50011, USA}
\email{jameseh@iastate.edu}
\urladdr{https://james-hanson.github.io/}

\begin{abstract}

  We answer some questions regarding the mean value theorem and related results in constructive mathematics.  The answers to these questions reveal interesting properties of the Mean Value Theorem, Law of Bounded Change, and Constancy Principle.  We see that, in contrast to the Intermediate Value Theorem whose approximate analogue was shown to hold constructively by Frank, the natural approximate versions of these theorems fail to hold in neutral constructive mathematics.  Our proof of this makes use of the existence of a topos in which the real numbers are in bijection with the naturals, which was shown by Bauer and the second author.  Using Booij's notion of locators, we show that the aforementioned approximate analogues do hold in the presence of a small amount of countable choice, and also under suitable locator lifting hypotheses. We also show that an even weaker approximate analogue of the Mean Value Theorem holds in neutral constructive mathematics.



\end{abstract}

\maketitle


\section{Introduction}\label{SectionIntro}

We consider the following statement, referred to henceforth as the Approximate Mean Value Theorem (or AMVT). Note that here and throughout the paper, continuity and differentiability are understood in the sense of pointwise continuity and pointwise differentiability. 

\begin{thm*}[AMVT]
  For any $a < b$ and $f: [a,b] \to \mathbb{R}$, if $f$ is continuous on $[a,b]$ and differentiable on $(a,b)$, then for any $\e > 0$, there exists $c \in (a,b)$ such that 
  \[
    \left| f'(c) - \frac{f(b)-f(a)}{b-a} \right| < \e.
  \]
\end{thm*}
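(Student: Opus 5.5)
The plan is to prove AMVT assuming a small amount of countable choice. The abstract indicates that AMVT is not provable in neutral constructive mathematics: the statement follows at once from the classical Mean Value Theorem, but the Bauer--Hanson topos blocks any choice-free argument. So a choice-free proof should not be attempted, and the goal is to isolate exactly where choice enters.

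\textbf{Step 1: reduction.} Subtract the linear function through $(a,f(a))$ and $(b,f(b))$. This lets us assume $f(a)=f(b)=0$, and the goal becomes finding $c\in(a,b)$ with $|f'(c)|<\e$.

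\textbf{Step 2: the key estimate on secant slopes.} Suppose $c\in[u,v]$, where $u<v$ and $u\le c\le v$, and $f$ is differentiable at $c$. Then
\[
\frac{f(v)-f(u)}{v-u}
\]
is a convex combination of the difference quotients of $f$ at $c$ from the right and from the left; a degenerate one is simply omitted when $c$ is an endpoint. Hence if $|v-u|<\delta$, where $\delta$ is a modulus for differentiability at $c$ with tolerance $\e/2$, this secant slope is within $\e/2$ of $f'(c)$. This needs only pointwise differentiability at $c$ and no choice.

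It therefore suffices to build nested intervals $[u_n,v_n]\subseteq[a,b]$ with $v_n-u_n\to 0$, each having secant slope of absolute value less than $\e/2$. We must also keep the intervals inside a compact subinterval of $(a,b)$, so that the limit point $c$ lies in $(a,b)$.

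\textbf{Step 3: one refinement step.} Fix an interval $[u,v]$ whose secant slope $s$ satisfies $|s|<\e/2$, and an integer $k$. Let $h=(v-u)/k$ and define
\[
g(x)=\frac{f(x+h)-f(x)}{h},
\]
which is continuous on $[u,v-h]$. The values $g(u+ih)$ for $i=0,\dots,k-1$ average exactly $s$. So we can compare approximately: either some $g(u+ih)$ is already within the required tolerance of $s$, or two consecutive values lie on opposite sides of $s$. In the second case Frank's approximate Intermediate Value Theorem, which holds neutrally, produces $x$ with $g(x)$ as close to $s$ as we like. The new interval is $[x,x+h]$, and its secant slope stays strictly below $\e/2$ in absolute value.

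To prevent drift toward $\e/2$, the error budget must shrink geometrically, for example $\e/2-\e/2^{n+2}$ at stage $n$. Taking $k\ge 3$ and trimming keeps the intervals away from $a$ and $b$ at the first stage.

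\textbf{Step 4: iteration, the main obstacle.} Step 3 only asserts the \emph{existence} of a suitable $x$ at each stage. Building the whole sequence $([u_n,v_n])_n$ therefore requires choosing witnesses countably often, in a dependent fashion. I would first try to make the choices canonical using Booij's locators: if each real produced carries a locator, the approximate comparisons in Step 3 become decidable. That would reduce the needed choice to $\ACNT$-style choice of binary decisions, or remove it entirely under a locator-lifting hypothesis. Failing that, I would fall back on dependent or countable choice directly.

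Once the sequence exists, the $u_n$ form a Cauchy sequence with limit $c\in(a,b)$ and $c\in[u_n,v_n]$ for all $n$. Step 2 then gives $|f'(c)|<\e/2+\e/2=\e$.
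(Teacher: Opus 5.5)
Your overall architecture matches the paper's. You reduce to Rolle, build nested intervals whose secant slopes stay below a slowly relaxing budget, and finish with your Step~2 estimate, which is \cref{approxFlat}. Your Step~3 improves on the paper's refinement step. The proof of \cref{weakApproxRolle} sorts subintervals into ``going up'' and ``going down'' and applies Frank's approximate IVT twice. Your fixed-width quotient $g$, whose values at the grid points $u+ih$ average exactly to $s$, needs only one application and yields the mean-value form directly. One repair in Step~2: drop the case split on whether $c$ is an endpoint, since that is not decidable. Instead write $f(v)-f(u)-f'(c)(v-u)$ as the difference of the remainders at $v-c$ and $u-c$, and bound each non-strictly by $\frac{\e}{2}$ times its increment.

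The gap is Step~4. This is where the entire content of the paper's \cref{approxRolle} lies, and the mechanism you suggest would not work as stated, for two reasons.

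\textbf{Locators must be fixed in advance.} To avoid choice during the recursion, locators must be chosen before it starts. So $\ACNT$ supplies them only for a countable family of reals given in advance, such as $(f(q))_{q\in\bbQ}$. The witnesses $x$ from Frank's theorem are arbitrary reals produced dependently, so you cannot arrange that ``each real produced carries a locator.'' You must perturb $x$ to a rational, using continuity of $f$. Then every candidate interval has rational endpoints, and every slope you query is an arithmetic combination of the $f(q)$.

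\textbf{Decidability does not select a witness.} Even with locators, making the comparisons inside Step~3 decidable does not pick an $x$, because Frank's theorem only asserts that some $x$ exists. What you need is a search:
\begin{enumerate}
  \item Enumerate the rational pairs of the right length in the current interval.
  \item Accept a pair when a single locator query on its slope succeeds, with the query's two thresholds placed between the stage-$n$ and stage-$(n+1)$ budgets.
  \item Run Step~3 so that the new slope stays below the \emph{stage-$n$} budget. That bound forces the query to succeed, and success certifies the stage-$(n+1)$ budget.
  \item Use Step~3 only to prove that an acceptable pair exists, so the least one in the enumeration can be taken without any choice.
\end{enumerate}
This is exactly the paper's locator-$n$-flatness, with $n$-flat $\Rightarrow$ locator-$(n+1)$-flat $\Rightarrow$ $(n+1)$-flat.

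Without this mechanism, your argument proves AMVT only from dependent choice. That is a much stronger hypothesis than the $\ACNT$ the paper uses.
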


As can be shown by the average student in a first course in real analysis, the Approximate Mean Value Theorem is true under the assumptions of classical mathematics. 

We  investigate the extent to which the Approximate Mean Value Theorem is true in more constructive settings.  In particular, we show in \cref{countex} that it is consistent with neutral constructive mathematics that the AMVT fails.

By contrast, we show in \cref{sec:WAMVT} that the following even weaker analogue of the Mean Value Theorem (WAMVT, \cref{WAMVT}) is provable in neutral constructive mathematics. 

\begin{thm*}[WAMVT]
  For any $a < b$ and $f: [a,b] \to \mathbb{R}$, if $f$ is continuous on $[a,b]$, then for any $\e > 0$ and $\delta > 0$, there exists $c, d \in (a,b)$ such that $c < d$, $d - c < \delta$, and
  \[
    \left| \frac{f(d) - f(c)}{d - c} - \frac{f(b)-f(a)}{b-a} \right| < \e.
  \]
\end{thm*}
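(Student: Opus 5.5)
Write $m = \frac{f(b)-f(a)}{b-a}$. Replacing $f$ by $g(x) = f(x) - m(x-a)$ does not change difference quotients relative to $m$: $\frac{g(d)-g(c)}{d-c} = \frac{f(d)-f(c)}{d-c} - m$. Also $g(a) = g(b)$. So it suffices to find $a<c<d<b$ with $d-c<\delta$ and $\left|g(d)-g(c)\right| < \e(d-c)$. The whole argument will be finitary: we only compare finitely many real numbers up to a tolerance. That is what keeps it inside neutral constructive mathematics, since no choice and no uniform continuity are used.

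\textbf{Step 1: move the endpoints inside.} Pointwise continuity of $g$ at $a$ and at $b$ gives $\eta>0$ with the following properties: $\eta < (b-a)/4$, and $\left|g(x)-g(a)\right|$ and $\left|g(y)-g(b)\right|$ are below some $\theta$ (fixed below) whenever $x \in [a,a+\eta]$ and $y \in [b-\eta,b]$. Take $a' = a+\eta/2$ and $b' = b-\eta/2$. Then $a<a'<b'<b$ and $\left|g(b')-g(a')\right| < 2\theta$.

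\textbf{Step 2: subdivide.} Choose $N$ so large that $h = (b'-a')/N < \delta$. Put $x_k = a' + kh$ for $k=0,\dots,N$; all these points lie in $(a,b)$. Telescoping gives
\[
  \sum_{k=0}^{N-1} \bigl(g(x_{k+1})-g(x_k)\bigr) = g(b')-g(a'),
\]
so the total is less than $2\theta$ in absolute value.

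\textbf{Step 3: find a good interval.} This is where the care is needed. Fix $\theta = \e\,(b'-a')/8$. This is legitimate because $b'-a' > (b-a)/2$ regardless of $\eta$, so we may set $\theta = \e(b-a)/16$ before choosing $\eta$. For each $k$, approximate the real number $g(x_{k+1})-g(x_k)$ by a rational $q_k$ within $\e h/4$. Since there are finitely many rationals, we can decidably pick $k$ minimizing $\left|q_k\right|$.

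We claim $\left|q_k\right| < \e h/2$. Suppose not. Then every $q_j$ has $\left|q_j\right| \ge \e h/2$. The differences themselves have absolute value at least $\e h/4$ and share the sign of their $q_j$. We now split decidably on the signs of the rationals.
\begin{itemize}
\item If all signs agree, the sum has absolute value at least $N\e h/4 = \e(b'-a')/4 > 2\theta$, a contradiction.
\item If the signs are mixed, this simple counting argument fails. This is the main obstacle.
\end{itemize}

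The fix for the mixed case is to use a discrete intermediate value argument instead of minimization. Pick a consecutive pair of indices where the sign of $q_j$ changes. There one difference is $\ge \e h/4$ and the next is $\le -\e h/4$, or vice versa. We cannot directly average these two. Instead, we refine the subdivision and use continuity of $g$ on the finitely many points. Concretely, consider the function
\[
  \phi(t) = g(t+h) - g(t) \quad \text{on } [x_j, x_{j+1}].
\]
It is pointwise continuous, with $\phi(x_j) > 0$ and $\phi(x_{j+1}) < 0$.

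We now use the approximate intermediate value theorem, which is provable in neutral constructive mathematics by Frank's result for pointwise continuous functions (cited in the abstract). It yields $t$ with $\left|\phi(t)\right| < \e h$. Setting $c=t$ and $d=t+h$ gives $d-c = h < \delta$ and $\left|g(d)-g(c)\right| < \e(d-c)$. We also have $c,d \in [a',b'+h]$. To keep this inside $(a,b)$, use indices only up to $N-2$, or shrink $h$ so that $b'+h<b$.

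In the first case of the sign split (all signs agree), the counting contradiction shows that some $\left|q_k\right| < \e h/2$ after all, and that interval $[x_k,x_{k+1}]$ works directly.

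Thus the proof reduces to the finite decidable sign analysis plus approximate IVT applied to $\phi$. Checking that the approximate IVT indeed holds neutrally for pointwise continuous functions is the step I expect to need the most care.
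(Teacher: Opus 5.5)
Your proof is correct, and in its key step it takes a different route from the paper. Both arguments subtract the secant line, cut the interval into pieces of equal small width, and use approximate comparisons to reach the dichotomy ``some piece has a small difference quotient, or two adjacent pieces go in opposite directions.''

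In the second case the paper applies Frank's approximate IVT to $f$ itself four times. It first gets $c'<d'$ on either side of the middle node at roughly the same height, and then $c<c'$ and $d>d'$ at a common lower height. The first round is there only to guarantee $d-c\ge d'-c'$, so that the tolerance $\frac12\varepsilon|d'-c'|$ for the second round is known in advance.

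You apply the approximate IVT once, to the chord function $\phi(t)=g(t+h)-g(t)$ on $[x_j,x_{j+1}]$. Because $d-c=h$ is fixed beforehand, the needed tolerance $\varepsilon h$ is known from the start. The bootstrapping, the normalization $f(e_n)\ge f(e_{n+2})$, and the $f\mapsto -f$ symmetry all disappear. Your route is shorter and yields a chord of prescribed length. The paper's version produces rational $c,d$ directly, which it later needs for the decidability of locator-flatness in the Approximate Rolle's Theorem. You can recover rationality by perturbing $t$ and $t+h$ using pointwise continuity.

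Three small repairs are needed.
\begin{enumerate}
\item The test $|q_k|<\varepsilon h/2$ is not decidable when $\varepsilon h$ is an arbitrary real. Either take $\varepsilon$, $a'$, $b'$ rational (you only need $a<a'<a+\eta$ and $b-\eta<b'<b$), or split approximately as ``$|q_k|<\varepsilon h/2$ or $|q_k|>\varepsilon h/3$'', as the paper does with $\gamma_n$, shrinking $\theta$ accordingly.
\item Your stated claim that the minimal $|q_k|$ is below $\varepsilon h/2$ is false in the mixed-sign case. What you actually use is the three-way case split, which is fine.
\item No fix at the right end is needed: $j+1\le N-1$ already gives $t+h\le x_{j+2}\le b'<b$.
\end{enumerate}
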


This and other results in our paper depend heavily on the constructive version of the intermediate value theorem for pointwise continuous functions proven by Frank in \cite{Frank}.

Inspired by the proof of Hardin and Velleman \cite{HV} that the Mean Value Theorem is provable in RCA$_0$, we show in \cref{AMVT} that the conclusion of the AMVT is true for functions that lift to locators in the sense of Booij.  This allows us to identify several weak forms of the axiom of countable choice that are each sufficient to prove the AMVT.

We use these results to also study the Law of Bounded Change,

\begin{thm*}
  For any $a < b$ and $f: [a,b] \to \mathbb{R}$, if $f$ is continuous on $[a,b]$ and differentiable on $(a,b)$ and if $|f'(x)| \leq M$ for all $x \in (a,b)$, then $|f(b) - f(a)| \leq M|b - a|$.
\end{thm*}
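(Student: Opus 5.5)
Since the conclusion $|f(b)-f(a)| \leq M|b-a|$ is a non-strict inequality between reals, it is the negation of $|f(b)-f(a)| > M(b-a)$. So I would argue by contradiction, which is legitimate in neutral constructive mathematics. Suppose $|f(b)-f(a)| > M(b-a)$. Replacing $f$ by $-f$ if necessary, I may assume that the secant slope $s = \frac{f(b)-f(a)}{b-a}$ satisfies $s > M$. To justify this, note that from $|y|>M$ one gets $y>M$ or $-y>M$ constructively, because $\max(y,-y) > M$ splits into these cases. Fix rationals $t_0 > t_1 > M$ with $s > t_0$.

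The core of the argument is to locate a point $c\in(a,b)$ at which the difference quotients of $f$ at arbitrarily small scales stay above $t_1$. That would force $f'(c) \geq t_1 > M$, contradicting $|f'(c)|\le M$. I would build $c$ by bisection. Given an interval $[a_n,b_n]$ with secant slope $> t_n$, its secant slope is the average of the slopes on the two halves. Hence at least one half has slope $> t_n - \eta_n$, where the slack $\eta_n>0$ is chosen so that $\sum_n \eta_n < t_0 - t_1$. Choosing a half only requires the constructively valid approximate splitting $x<y \Rightarrow (x<z \vee z<y)$, so no exact comparison is needed.

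To avoid the endpoint problem, I would use thirds or overlapping halves so that $c$ lies strictly inside every $[a_n,b_n]$. Then the intervals shrink to $c$, and each contains $c$ in its interior. The difference quotient $\frac{f(b_n)-f(a_n)}{b_n-a_n}$ is a convex combination of the one-sided quotients $\frac{f(b_n)-f(c)}{b_n-c}$ and $\frac{f(c)-f(a_n)}{c-a_n}$. By pointwise differentiability at $c$, both of these tend to $f'(c)$. So $f'(c) \geq t_1 > M$, which is the desired contradiction.

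The main obstacle is that the bisection makes a sequence of binary choices, and building the sequence $(a_n,b_n)$ from them uses $\ACNT$ (dependent choice of bits). That principle is not available in neutral constructive mathematics, and the paper explicitly flags that the approximate versions need some choice. I would first try to exploit the negative form of the goal: we only need $\neg\neg\exists c$, not an actual $c$. I would also try to replace the sequence of choices by a single application of Frank's intermediate value theorem. Concretely, I would apply it to the continuous function $h(x) = f(x) - f(a) - t_1(x-a)$, which satisfies $h(a)=0<h(b)$, to find a point $c$ near a level crossing of $h$. The aim is to pick $c$ at a crossing where $h$ is forced to increase to the right at every scale, which contradicts $f'(c)\le M<t_1$.

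If making this "last crossing" argument choice-free turns out to be the crux, then I expect the Law of Bounded Change itself may only follow with the locator or choice hypotheses used for \cref{AMVT}. In that case I would prove it from AMVT directly: AMVT applied with $\e < s - M$ gives a $c$ with $f'(c) > M$, an immediate contradiction.
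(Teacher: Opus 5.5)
The part of your plan that tries to remove $\ACNT$ cannot succeed, and that is the question your proposal leaves open. This covers both the $\neg\neg$ form of the goal and a ``last crossing'' located with Frank's IVT. Section~\ref{countex} of the paper shows that if $\Nb$ surjects onto $\Rb$ (consistent with neutral constructive mathematics by Bauer--Hanson), there is a $1$-Lipschitz $f:[0,1]\to[0,1]$ that is locally constant at every point, with $f(0)=0$ and $f(1)>0$. Then $f'(c)=0$ for all $c\in(0,1)$, so the hypotheses hold with $M=0$ while $|f(1)-f(0)|>0$. This is a positive refutation: in that model your assumption $s>M$ is outright true. So no derivation of $\bot$ from it exists, double-negated or not, and every point an IVT argument could produce has $f'(c)=0$. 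Hence the statement as written is not a theorem of neutral constructive mathematics. The paper proves it only under $\ACNT$ or the hypothesis that $f$ lifts to locators. Your closing fallback is exactly its proof: in Corollary ``Law of Bounded Change with Locators'', \cref{AMVTforLift} gives $|f(b)-f(a)|\le (M+\e)|b-a|$ for every $\e>0$.

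Read as a proof from $\ACNT$, your bisection argument is correct and is a genuinely more elementary route than the paper's. ``Secant slope $>t$'' is a one-sided condition, and it survives halving because the whole slope is the average of the two half-slopes. So you avoid the WAMVT and Frank-IVT machinery that \cref{approxRolle} needs to keep the two-sided condition ``slope near a target'' alive across scales. The price is that you get only the Law of Bounded Change and the constancy principle, not the AMVT or Darboux.

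The choice you use is exactly $\ACNT$. Let $I_\sigma$ be the dyadic subinterval coded by a finite binary string $\sigma$, and put $X_\sigma=\{0:\text{left-half slope of }I_\sigma>t_{|\sigma|+1}\}\cup\{1:\text{left-half slope of }I_\sigma<t_{|\sigma|}\}$. Each $X_\sigma$ is inhabited by locatedness, and in case $1$ the right half has slope $>t_{|\sigma|}$ whenever $I_\sigma$ does. Choose $i_\sigma\in X_\sigma$ for all $\sigma$ at once and follow the resulting path. If $f$ lifts to locators, this step is choice-free.

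One repair is needed. Thirds or overlapping halves do not keep $c$ strictly interior: you may keep picking an end third, and the slopes over overlapping halves do not average to the whole slope. But strict interiority is unnecessary. Suppose $a_n\le c\le b_n$ and $b_n-a_n<\gamma$. Apply the differentiability estimate at $c$ with $h=b_n-c\ge 0$ and $h'=a_n-c\le 0$, as in \cref{approxFlat}. This gives $\bigl|\frac{f(b_n)-f(a_n)}{b_n-a_n}-f'(c)\bigr|\le\delta$ with no division by $b_n-c$ or $c-a_n$. To get $c\in(a,b)$, first use continuity at $a$ and $b$ to shrink to a rational $[a_0,b_0]\subset(a,b)$ whose slope is still $>t_0$.
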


and the constancy principle, stated below.

\begin{thm*}
  Let $f: \bbR \to \bbR$ be a pointwise differentiable function with $f'(x) = 0$ for all $x$.  Then $f$ is constant.
\end{thm*}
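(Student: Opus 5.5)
The plan is to reduce the constancy principle to a negative statement, which is then proved by a bisection (nested-interval) argument; the one step I do not yet know how to carry out without countable choice is the passage from individual bisection steps to a sequence. Fix $a<b$; it suffices to show $f(a)=f(b)$. Equality of reals is the negation of apartness, so it suffices to refute $f(a)\neq f(b)$. Assume, towards a contradiction, that $|f(b)-f(a)|>0$. By symmetry, and since apartness splits into $>$ or $<$, we may take $s:=\frac{f(b)-f(a)}{b-a}>0$. The goal is to produce a point $c$ at which the pointwise derivative $f'(c)=0$ is incompatible with difference quotients staying bounded away from $0$.

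The first step is the bisection itself. Fix a decreasing sequence of rationals $\eta_n>0$ with $\sum_n \eta_n < s/2$. Given an interval $[a_n,b_n]$ with slope $s_n>s/2$, let $m$ be its midpoint. The slope $s_n$ is the average of the slopes over $[a_n,m]$ and $[m,b_n]$, so at least one of these two slopes exceeds $s_n-\eta_n$, and approximate comparison lets us decide which. Take $[a_{n+1},b_{n+1}]$ to be that half. Then all slopes stay above $s-\sum\eta_n>s/2$, and the intervals shrink to a point $c=\lim a_n=\lim b_n$.

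The second step derives the contradiction at $c$. Since $f'(c)=0$, there is $\delta>0$ with $|f(y)-f(c)|\le \frac{s}{4}|y-c|$ whenever $|y-c|<\delta$. Choose $n$ with $b_n-a_n<\delta$. Since $a_n\le c\le b_n$, we get
\[
|f(b_n)-f(a_n)|\le |f(b_n)-f(c)|+|f(c)-f(a_n)|\le \tfrac{s}{4}(b_n-a_n).
\]
This contradicts $s_n>s/2$. (The degenerate cases $c=a_n$ or $c=b_n$ are covered by the same estimate.) The conclusion is $\neg(f(a)\neq f(b))$, hence $f(a)=f(b)$. Note that continuity is not even needed separately here, since pointwise differentiability at $c$ already supplies the estimate.

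The main obstacle is the first step: it needs a choice at every stage to be assembled into a single sequence, which is an instance of $\ACNT$ (or of dependent choice). The paper indicates that statements of this kind can fail without choice. What I would try first is to exploit that the goal is negative. It would suffice to show that the hypothesis $s>0$ refutes the \emph{nonexistence} of a good nested sequence, i.e. to derive $\bot$ from only the double-negated existence of the sequence. If that fails, the fallback is to replace bisection by a choice-free, uniquely specified construction in the spirit of the WAMVT proof. Concretely, apply Frank's intermediate value theorem to $g(x)=f(x)-f(a)-\frac{s}{2}(x-a)$, which satisfies $g(a)=0<g(b)$ and $g'=-\frac{s}{2}<0$ everywhere, to obtain a point $c$ with $g(c)=\frac12 g(b)$. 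Then use local strict decrease of $g$ near $c$, together with a second application of the intermediate value theorem on $[c,b]$, to reach a contradiction. Here the delicate point is arranging that the crossing point found is one where $g$ must locally increase, without appealing to a supremum.
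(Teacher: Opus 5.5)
\textbf{There is a genuine gap here: the step you flag cannot be carried out, and neither fallback can work.} The reason is that the statement is not provable in neutral constructive mathematics at all; the paper shows this in Section~2. In the Bauer--Hanson topos there is a surjection $n\mapsto c_n$ from $\Nb$ onto $[0,1]$. Cover each $c_n$ by an interval of length at most $\e_n$, with $3\sum_n\e_n<\e$. Take the uniform limit of $x$ minus the function of Lemma~\ref{gConstruction} for the first $n$ covering intervals; that function is uniquely specified, so no choice is used. The result is a non-decreasing $1$-Lipschitz $f$ with $f(0)=0$ and $f(1)>1-\e$ that is locally constant at every point. Extended by constants, it is a map $\bbR\to\bbR$ with $f'(x)=0$ for all $x$ that is not constant.

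This model defeats both of your fallbacks. Your own second step shows that a good nested sequence would yield $\bot$, so in that model no such sequence exists, and $\neg\neg\exists(\text{sequence})$ cannot be a theorem. Likewise your $g(x)=f(x)-\frac{s}{2}x$ satisfies $g(0)<g(1)$ while $g'(c)=-\frac{s}{2}$ at every point $c$. So no application of Frank's IVT can locate a crossing point where $g$ must locally increase, because there is none. The obstruction sits exactly where you put it, in assembling infinitely many undecidable binary decisions into one sequence, but it is an independence result rather than a technicality.

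What the paper proves is the statement under an extra hypothesis: $\ACNT$, or that $f$ lifts to locators. Under $\ACNT$ your argument is a complete and more direct proof. Index the dyadic subintervals $I_\sigma$ of $[a,b]$ by finite binary strings $\sigma$. Let $X_\sigma\subseteq\{0,1\}$ be the set of halves of $I_\sigma$ whose slope exceeds the slope of $I_\sigma$ minus $\eta_{|\sigma|}$. Your averaging argument shows each $X_\sigma$ is inhabited, and $\ACNT$ gives a choice function on these countably many sets. The branch is then defined by primitive recursion, so no dependent choice is needed, and your estimate at the limit point $c$ is correct as written. The reduction to $a<b$ needs one more line: for any $z$, either $0<z$ or $z<1$, as in the paper.

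The paper instead routes $\ACNT$ through Booij's locators. With locators for the values of $f$, flatness of a rational pair becomes decidable, and nested flat pairs are found by a search whose success the WAMVT guarantees. That costs more machinery but yields the two-sided AMVT, from which constancy follows. Your bisection only bounds $f'(c)$ from one side, which suffices for constancy and the law of bounded change but not for the AMVT.
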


We should note that constructive versions of the mean value theorem for uniformly continuous and uniformly differentiable functions were considered by Bishop and Bridges in \cite{Bishop2011-nx} and by Coquand and Spitters in \cite{Coquand2012}.

\section{Counterexample in Neutral Constructive Mathematics}
\label{countex}


Neutral constructive mathematics refers to mathematics done without the law of excluded middle (LEM) and without the axiom of choice. It is therefore ``neutral'' in that it is agnostic regarding the truth of LEM and AC. It assumes no extra axioms or weakenings of the ones that have been excised.  

We show in this section that there exist explicit countermodels to the AMVT and the Constancy Principle in neutral constructive mathematics.  Namely, we show that, in any model admitting a surjection from $\mathbb{N}$ to the Dedekind reals, the AMVT fails.  Such models were shown to exist by Bauer and the second author in \cite{BH}.  

We make use of the following lemma.  Intuitively it asserts, given a finite list $[a_0, b_0], \ldots, [a_{n-1}, b_{n-1}]$ of subintervals of $[0,1]$, the existence of a unique function which behaves suitably like the integral $\int^{x}_{0} \chi_{[a_0, b_0] \cup \ldots \cup [a_{n-1}, b_{n-1}]}(y) \operatorname{d}y$.

\begin{lem}\label[lem]{gConstruction}
  For any finite list $[a_0, b_0], \ldots, [a_{n-1}, b_{n-1}]$ of subintervals of $[0,1]$, there exists a function $g: [0,1] \to [0,1]$ satisfying:
  \begin{enumerate}
    \item $g(0) = 0$,
    \item $g$ is non-decreasing and $1$-Lipschitz,
    \item for all $i < n$ and $x$, $y$ with $a_i \leq x \leq y \leq b_i$, we have $g(y) - g(x) = y - x$, and
    \item $g(x)$ is constant on any open interval disjoint from $[a_0, b_0] \cup \ldots \cup [a_{n-1}, b_{n-1}]$.
  \end{enumerate}
  Moreover, the above conditions uniquely specify $g$.
\end{lem}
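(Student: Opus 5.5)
The plan is to give an explicit formula for $g$, namely $g(x)$ is the measure of $\left([a_0,b_0]\cup\cdots\cup[a_{n-1},b_{n-1}]\right)\cap[0,x]$. We cannot decide how the intervals overlap, so we write this measure via inclusion--exclusion using only $\min$, $\max$ and arithmetic, all of which are constructively available on the reals. For a nonempty $S\subseteq\{0,\ldots,n-1\}$, the set $[0,x]\cap\bigcap_{i\in S}[a_i,b_i]$ is an interval of length $\max\bigl(0,\min(x,\min_{i\in S}b_i)-\max_{i\in S}a_i\bigr)$; here we use $a_i\geq 0$. We then set
\[
  g(x)=\sum_{\emptyset\neq S\subseteq n}(-1)^{|S|+1}\max\Bigl(0,\ \min\bigl(x,\min_{i\in S}b_i\bigr)-\max_{i\in S}a_i\Bigr).
\]
This is a well-defined (indeed uniformly continuous) function $[0,1]\to\mathbb{R}$, and no case distinctions are needed to define it.

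The key observation for verifying properties (1)--(4) is that each of them, once the universally quantified variables are fixed, is a conjunction of statements of the form $u\leq v$ or $u=v$ between real numbers. These are negative statements ($u\leq v$ is $\neg(v<u)$), so they are $\neg\neg$-stable. Concretely:
\begin{itemize}
  \item For (2), fix $x\leq y$; then $g(x)\leq g(y)$ and $|g(y)-g(x)|\leq |y-x|$ are both stable.
  \item The range condition $0\leq g(x)\leq x\leq 1$ is stable.
  \item For (3), fix $i$ and $x,y$; the equation is stable.
  \item For (4), fix $x,y$ in the open interval; then $g(x)=g(y)$ is stable.
\end{itemize}
In each case only finitely many reals are involved: the $a_i$, the $b_i$, $x$, $y$, and the endpoints of the open interval. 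For any two reals $u,v$ we have $\neg\neg(u<v\vee u=v\vee v<u)$, so it suffices to prove the goal under an arbitrary fixed total ordering of these finitely many numbers. Under a fixed ordering every $\min$ and $\max$ in the formula is resolved. Then $g$ becomes an explicit piecewise linear function on $[0,1]$ with slope $1$ on pieces covered by some $[a_i,b_i]$ and slope $0$ elsewhere, and the classical inclusion--exclusion computation verifies the properties.

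For uniqueness, let $g,h$ both satisfy (1)--(4) and fix $x\in[0,1]$. The claim $g(x)=h(x)$ is stable, so again we may assume a total ordering of $0,x,a_0,b_0,\ldots,a_{n-1},b_{n-1}$. The distinct points of this list lying in $[0,x]$ partition $[0,x]$ into finitely many consecutive closed pieces $[p,q]$ with $p<q$. Each piece is either contained in some $[a_i,b_i]$, or its interior $(p,q)$ is disjoint from the union. In the first case (3) gives $g(q)-g(p)=q-p=h(q)-h(p)$. In the second case (4) shows $g$ is constant on $(p,q)$, and continuity (from the Lipschitz condition in (2)) extends this to $g(p)=g(q)$, and likewise for $h$. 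Summing the increments over the pieces starting from $g(0)=h(0)=0$ gives $g(x)=h(x)$.

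The main obstacle is making sure the $\neg\neg$-reduction is legitimate. Every property must be stated pointwise so that it is genuinely a negative formula before we invoke trichotomy. In particular, we must not try to prove something like ``$g$ is piecewise linear'' globally, since that is not stable. The rest is routine classical bookkeeping for piecewise linear functions.
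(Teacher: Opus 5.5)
Your proof is correct. You use the same inclusion--exclusion formula for $g$ as the paper, but you verify properties (1)--(4) and uniqueness by a genuinely different mechanism.

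\textbf{The paper's route.} The paper never case-splits on the order of the $a_i,b_i$.
\begin{itemize}
  \item It replaces the endpoints by grid points $k/M$ within $2/M$. The resulting $g'$ is then a purely combinatorial count of covered grid cells, where everything is decidable.
  \item It checks approximate versions of the properties on the grid.
  \item It transfers them to $g$ using the uniform estimate $|g-g'|<4(2^n-1)/M$ together with the $(2^n-1)$-Lipschitz continuity of $g$, letting $M\to\infty$.
  \item For uniqueness, it covers all endpoints by finitely many disjoint open intervals of total length $<\varepsilon$. It then notes that $h-g$ is constant on the gaps and uses the $2$-Lipschitz bound to get $|h-g|<2\varepsilon$.
\end{itemize}

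\textbf{Your route.} You observe that every target statement, stated pointwise, is negative and hence $\neg\neg$-stable. Since $\neg\neg$-trichotomy holds for reals and $\neg\neg$ commutes with finite conjunctions, you may assume a total order of the finitely many reals in play. This reduces existence to a finite computation. For uniqueness you telescope over the breakpoint partition of $[0,x]$, using (3) on covered pieces and (4) plus continuity on uncovered ones.

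\textbf{Comparison.} Your argument is shorter and avoids all the error bookkeeping. In particular it avoids the $\pm 2/M$ margins in the paper's grid statements. It also avoids the tacit constructive work in building a disjoint $\varepsilon$-cover in which each $a_i,b_i$ sits in a known interval. The price is reliance on the stability meta-argument. That is why you rightly insist that each property be stated pointwise before invoking trichotomy. The paper's argument is purely quantitative and keeps closer to the picture of $g$ as a limit of discrete counts.

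\textbf{One point to phrase more carefully.} Under a fixed ordering you can only resolve the $\min$/$\max$ in $g(t)$ for $t$ in your finite list, so you do not literally get a global piecewise-linear description of $g$ on all of $[0,1]$. What you actually need is only this: for each listed $t$, $g(t)$ equals the total length of the elementary pieces of $[0,t]$, between consecutive listed points, that are covered by some $[a_i,b_i]$. This follows from inclusion--exclusion applied piece by piece, and it is exactly what your verification uses.
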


\begin{proof}
  See Appendix~\ref{sec:proof-of-lemma}.
\end{proof}


The next lemma is immediate from the construction in Lemma~\ref{gConstruction}.

\begin{lem}\label[lem]{gBounds}
  Define $g_{[a_0, b_0], \ldots, [a_{n-1}, b_{n-1}]}$ as in the above lemma. For any intervals $[a_0, b_0], \ldots, [a_{n}, b_{n}]$ and $x \in [0,1]$, we have 
  \[
    | g_{[a_0, b_0], \ldots, [a_{n-1}, b_{n-1}]}(x) - g_{[a_0, b_0], \ldots, [a_{n}, b_{n}]}(x) | \leq |b_n - a_n |.
  \]
\end{lem}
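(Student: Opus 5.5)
The plan is to reduce the inequality to the case where all parameters are rational, where everything is decidable and a classical finite computation works. The reduction uses the fact that both sides of the inequality depend continuously on the data $a_0, b_0, \ldots, a_n, b_n, x$. The inequality $|g_{\ldots,[a_{n-1},b_{n-1}]}(x) - g_{\ldots,[a_n,b_n]}(x)| \le |b_n - a_n|$ is a non-strict inequality between reals, i.e.\ the negation of a strict one. So it suffices to show two things: the set of parameter tuples where it holds is closed (from continuity), and it contains all rational tuples. Then, assuming the strict reverse inequality at some tuple, continuity gives a rational tuple nearby where the strict reverse inequality still holds, which is a contradiction.

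\textbf{Step 1: continuous dependence.} Write $\lambda(I) = \max(0, \text{right endpoint} - \text{left endpoint})$ for an interval $I$. For a nonempty $S \subseteq \{0,\ldots,n-1\}$ put
\[
  m_S(x) = \max\Bigl(0, \min\bigl(x, \min_{i \in S} b_i\bigr) - \max\bigl(0,\max_{i\in S} a_i\bigr)\Bigr).
\]
I would take the candidate
\[
  G(x) = \sum_{\emptyset \ne S} (-1)^{|S|+1} m_S(x),
\]
which is the inclusion--exclusion formula for the measure of $[0,x] \cap \bigcup_i [a_i,b_i]$. This $G$ is built from $\min$, $\max$ and finite sums, so it is jointly continuous (indeed Lipschitz) in all the endpoints and in $x$.

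Next I would show that $G$ satisfies conditions (1)--(4) of \cref{gConstruction}; by the uniqueness clause of that lemma, $G$ is then equal to $g$. Each of the conditions (1)--(4) is either a non-strict inequality or an equation between continuous expressions in the parameters. So, by the same density argument as above, it is enough to check them at rational parameters, where they are the classical facts about the measure of a finite union of intervals. Condition (4) is phrased with ``any open interval disjoint from the union''. I would handle it by noting that for $x<y$ in such an interval, $G(y)-G(x)$ is again a continuous expression whose value is forced to be $0$ classically, and then approximating.

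\textbf{Step 2: the rational case.} For rational endpoints and rational $x$, everything is decidable. With $U = \bigcup_{i<n}[a_i,b_i]$ we have
\[
  g_{\ldots,[a_n,b_n]}(x) - g_{\ldots,[a_{n-1},b_{n-1}]}(x) = \lambda\bigl([0,x]\cap[a_n,b_n]\setminus U\bigr).
\]
This quantity lies in $[0, |b_n - a_n|]$, which gives the inequality.

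\textbf{Step 3: the extension.} Combining Steps 1 and 2 via the closedness/density argument from the first paragraph yields the general case.

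The main obstacle is Step 1. One has to make rigorous, constructively, that the explicit formula satisfies condition (4) and thus coincides with $g$. Alternatively, if the construction in Appendix~\ref{sec:proof-of-lemma} already provides an explicit min/max formula, one can simply read off continuity in the parameters from it. After that, the rest is routine approximation.
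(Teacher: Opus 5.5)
Your proposal is correct and is essentially the argument the paper has in mind when it calls the lemma immediate from the construction. The appendix defines $g$ by exactly your inclusion--exclusion formula (your extra $\max(0,\cdot)$ is harmless since $a_i \geq 0$), so your Step~1 reduces to reading off the Lipschitz dependence on the parameters; you need not re-verify conditions (1)--(4) yourself, which also sidesteps your rather vague treatment of condition (4). The appendix's grid approximation (endpoints of the form $k/M$, where the formula counts covered cells) then plays the role of your rational case plus density step.
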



We are now equipped to construct a counterexample to the Constancy Principle.

\begin{prop}
  If there is a surjection from $\mathbb{N}$ onto $\mathbb{R}$, then for any $\e > 0$, there is a non-decreasing $1$-Lipschitz function $f:[0,1] \to [0,1]$ such that $f(0)=0$, $f(1) > 1 - \e$, and for every $x \in [0,1]$, $f$ is locally constant at $x$.
\end{prop}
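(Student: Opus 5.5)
Let $s:\mathbb{N}\to\mathbb{R}$ be a surjection. We build $f$ as a limit of the functions $g$ from \cref{gConstruction}, covering every real by a small interval, exactly as in the classical proof that a countable set has measure zero. Fix $\e>0$ and set $r_n = \e/2^{n+2}$. Let $I_n = [a_n,b_n] = [s(n)-r_n,\, s(n)+r_n]\cap[0,1]$. Since $s(n)$ is an arbitrary real, intersecting with $[0,1]$ is not decidable. So instead I would take $a_n = \max(0,\min(1,s(n)-r_n))$ and $b_n = \max(0,\min(1,s(n)+r_n))$. This gives a (possibly degenerate) subinterval of $[0,1]$ with $b_n-a_n\le 2r_n$. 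Define $g_n = g_{I_0,\dots,I_{n-1}}$ by \cref{gConstruction}, and put
\[
  h = \lim_n g_n,\qquad f(x) = x - h(x).
\]
By \cref{gBounds}, $|g_n(x)-g_{n+1}(x)|\le 2r_n$. Hence the sequence $(g_n)$ is uniformly Cauchy with explicit modulus, so $h$ exists constructively with no choice needed: each $g_n$ is uniquely determined, so the sequence is definable. Moreover $|h(x)-g_0(x)|\le \sum_n 2r_n = \e/2$. Since $g_0$ is the function for the empty list, condition (4) forces $g_0\equiv 0$, and so $h(1)\le \e/2$.

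Next I would check the required properties of $f$. Each $g_n$ satisfies conditions (1)--(3). Since $g_n$ is non-decreasing and 1-Lipschitz, we get $0\le g_n(y)-g_n(x)\le y-x$ for $x\le y$, and this passes to the limit $h$. Therefore $f(y)-f(x) = (y-x)-(h(y)-h(x))\in[0,y-x]$, so $f$ is non-decreasing and 1-Lipschitz. Also $f(0)=0$, and $f(1)=1-h(1)\ge 1-\e/2>1-\e$. Finally $f$ maps into $[0,1]$, since $0 = f(0)\le f(x)\le f(1)\le 1$.

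The key step is local constancy, and it is where surjectivity is used. Given $x\in[0,1]$, pick $n$ with $s(n)=x$. Then $x$ lies in the interior of $[s(n)-r_n, s(n)+r_n]$, so for $y,z$ in $U=(x-r_n,x+r_n)\cap[0,1]$ with $y\le z$ we have $a_n\le y\le z\le b_n$. This holds because the clamping to $[0,1]$ does not affect points already in $[0,1]$. For every $m>n$, interval $I_n$ appears in the list defining $g_m$, so condition (3) gives $g_m(z)-g_m(y)=z-y$. Passing to the limit gives $h(z)-h(y)=z-y$, hence $f(z)=f(y)$. To handle $y,z$ in either order without deciding $y\le z$, I would note that the identity $g_m(z)-g_m(y)=z-y$ is an equality of reals. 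Such an equality follows from its negation being absurd, and the negation is refuted by comparing in the cases $y\le z$ or $z\le y$; alternatively, one can use continuity and a density argument.

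The main obstacle I expect is the constructive bookkeeping rather than the idea itself. First, one must ensure the clamped intervals are legitimate inputs to \cref{gConstruction}, including degenerate ones, and that \cref{gBounds} applies to them. Second, $h$ must be obtained without choice; uniqueness in \cref{gConstruction} is what makes this work. Third, the equalities $g_m(z)-g_m(y)=z-y$ must hold for all $y,z\in U$ without any case split on their order. For the last point I would first try proving the identity for $y\le z$ and then extending it by the stability of equality of reals, since $\neg\neg(y\le z \vee z\le y)$ holds constructively.
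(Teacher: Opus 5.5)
Your proposal follows the paper's proof essentially step for step. You clamp intervals of summable length around the enumerated reals, take the uniform limit of $x - g_{I_0,\dots,I_{n-1}}(x)$ (choice-free by the uniqueness in \cref{gConstruction}), and get local constancy at $x$ from condition (3) for the index $n$ with $s(n)=x$; clamping a surjection onto $\mathbb{R}$ directly, rather than first passing to an enumeration of $[0,1]$, is an inessential difference. There is one arithmetic slip: with $r_n=\e/2^{n+2}$ you have $\sum_n 2r_n=\sum_n \e/2^{n+1}=\e$, not $\e/2$, so as written you only get $f(1)\ge 1-\e$, and taking $r_n=\e/2^{n+3}$ (the paper uses $3\sum_n\e_n<\e$) restores the required strict inequality $f(1)>1-\e$.
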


\begin{proof}
  The existence of a surjection from $\mathbb{N}$ onto $\mathbb{R}$ yields a surjection from $\mathbb{N}$ onto $[0,1]$. Fix an enumeration $(c_n)_{n \in \mathbb{N}}$ of $[0,1]$. Fix a sequence $(\e_n)_{n \in \mathbb{N}}$ of positive real numbers such that $3 \sum^{\infty}_{n=0} \e_n < \e$. For each $n$, set $a_n = \max\{c_n - \frac{1}{2}\e_n, 0\}$ and $b_n = \min\{c_n + \frac{1}{2}\e_n, 1\}$. By construction, $a_n \leq b_n$, $|b_n - a_n| \leq \e_n$, and $[a_n, b_n]$ is a subinterval of $[0,1]$.

  For each $n \in \mathbb{N}$, define $f_n(x) = x - g_{[a_0, b_0], \ldots ,[a_{n-1}, b_{n-1}]}(x)$ as in Lemma~\ref{gConstruction}. Since $g$ is uniquely specified by Lemma~\ref{gConstruction}, we can do this without invoking any choice principles. By Lemma~\ref{gBounds}, for each $n$ and for each $x \in [0,1]$, we have $|f_n(x) - f_{n-1}(x)| \leq 2\e_{n-1}$, so the sequence $(f_n)_{n \in \mathbb{N}}$ converges uniformly. Consider the limit $f:[0,1] \to [0,1]$ of this sequence. We see that $f$ is non-decreasing and $1$-Lipschitz, and since $f_n(0)=0$ for each $n$, we have that $f(0) = 0$. Since $f_0(1) = 1$, we have by construction that 
  \[
    f(1) \geq 1 - \frac{2}{3}\e > 1-\e.
  \]

  It remains to show that $f(x)$ is locally constant at each Dedekind real. To this end, fix $r \in [0,1]$ and find an $m$ such that $c_m = r$. By construction, for all $n > m$, we have that $f_n(x)$ is constant on the interval $(a_m, b_m)$ containing $r$. Therefore $f(x)$ is also constant on the interval $(a_m, b_m)$.
\end{proof}

By \cite{BH}, it is consistent with neutral constructive mathematics that there is a surjection from $\mathbb{N}$ to $\mathbb{R}$. Therefore, the argument above shows that it is consistent with neutral constructive mathematics that there is a pointwise differentiable function $f$ satisfying $f'(x) = 0$ for all $x$ but $f$ is not constant.

\begin{thm}
  The AMVT is not provable in neutral constructive mathematics.
\end{thm}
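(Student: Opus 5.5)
The plan is to show that the AMVT fails in any model of neutral constructive mathematics with a surjection $\mathbb{N} \to \mathbb{R}$; such models exist by \cite{BH}. Since neutral constructive mathematics is sound for these models, the AMVT cannot be provable. So I will work inside such a model and exhibit an explicit counterexample using the preceding Proposition.

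Fix $\e = \frac{1}{2}$ and apply the Proposition. This gives a non-decreasing $1$-Lipschitz function $f:[0,1]\to[0,1]$ with $f(0)=0$ and $f(1) > \frac{1}{2}$, which is locally constant at every $x \in [0,1]$. The hypotheses of the AMVT then need checking.
\begin{enumerate}
  \item $f$ is pointwise continuous, since it is $1$-Lipschitz.
  \item $f$ is differentiable at each $c \in (0,1)$ with $f'(c) = 0$. Indeed, there is an open interval around $c$ on which $f$ is constant, so the difference quotients $\frac{f(c+h)-f(c)}{h}$ vanish for all sufficiently small $h \neq 0$. This gives differentiability in the $\e$-$\delta$ sense directly, with no choice needed, because the interval is handed to us by the local constancy witness.
\end{enumerate}

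Now apply the AMVT to $f$ on $[0,1]$ with tolerance $\frac{1}{4}$. It would produce $c \in (0,1)$ with
\[
  \left| f'(c) - (f(1)-f(0)) \right| < \tfrac{1}{4}.
\]
But $f'(c) = 0$ and $f(1)-f(0) > \frac{1}{2}$, so $\left| 0 - (f(1)-f(0)) \right| > \frac{1}{2}$, a contradiction. Hence the AMVT is false in this model, and the theorem follows.

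The only delicate point is making sure the metatheoretic step is legitimate. The model of \cite{BH} is a topos, so the derivation really takes place in its internal logic, which validates neutral constructive mathematics. The statements ``pointwise differentiable'' and ``locally constant at every point'' must be interpreted internally, which they are, since the Proposition was proved internally from the surjection hypothesis. Beyond this, the argument is routine.
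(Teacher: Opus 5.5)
Your proof is correct and takes essentially the same route as the paper. You take the locally constant, $1$-Lipschitz, non-decreasing $f$ from the preceding Proposition, available internally in the Bauer--Hanson topos with a surjection $\mathbb{N}\to\mathbb{R}$; you note $f'(c)=0$ on all of $(0,1)$ and conclude that the AMVT fails for any tolerance below $f(1)-f(0)$. The only cosmetic differences are that you use the bound $f(1)>\frac{1}{2}$ directly with tolerance $\frac{1}{4}$ instead of rescaling to $f(1)=1$, and that you spell out the continuity and differentiability checks the paper leaves implicit.
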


\begin{proof}
  By the reasoning above, it is consistent with neutral constructive mathematics that there is a locally constant function $f: [0,1] \to [0,1]$ with $f(0) = 0$ and $f(1) > 0$.  We may assume $f(1) = 1$ by scaling.  For such $f$, picking $\e > 0$ such that $\e < 1$, we see that for all $c \in (0,1)$ we have 
  \[
    \left| f'(c) - \frac{f(1) - f(0)}{1-0}\right| = |0 - 1| = 1 > \e.\qedhere 
  \]
\end{proof}

In constructive analysis, it is not uncommon for unprovable statements that are classically true to become provable after changing pointwise conditions, such as pointwise continuity, to some uniform analog. Our counterexample above is $1$-Lipschitz and monotone, although it is not absolutely continuous (since it is essentially Cantor's function modified so that the function is `only increasing at points that do not exist'). 

\begin{quest}
  Does the Constancy Principle hold for absolutely continuous functions in neutral constructive mathematics?
\end{quest}



\section{Weak Approximate Mean Value Theorem}
\label{sec:WAMVT}

We can use finitely many applications of locatedness (i.e., the property that for any rational $r < s$, either $r < x$ or $x < s$) with the same ideas as the proof of \ref{approxRolle} to get a weak approximate Rolle's theorem and thereby the weak approximate mean value theorem. 

\begin{figure}
  \begin{tikzpicture}
    \draw[thick] (-5,2) .. controls (-2,6.5) and (-0.5,5.2) .. (0,4.8) .. controls (1,4.2) and (3,2.2) .. (5,1);
    \draw (-6,0) -- (6,0);

    \draw[dashed, gray] (-4,3.4) -- (1.8,3.4);
    \draw[dashed, gray] (-4.5,2.7) -- (2.7,2.7);

    \filldraw (-5,0) circle (2pt);
    \filldraw (0,0) circle (2pt);
    \filldraw (5,0) circle (2pt);
    \filldraw (-4,0) circle (2pt);
    \filldraw (1.8,0) circle (2pt);
    \filldraw (-4.5,0) circle (2pt);
    \filldraw (2.7,0) circle (2pt);

    \filldraw (-5,2) circle (2pt) node[left] {$f(e_n)$};
    \filldraw (0,4.8) circle (2pt) node[above right] {$f(e_{n+1})$};
    \filldraw (5,1) circle (2pt) node[right] {$f(e_{n+2})$};
    \filldraw (-4,3.4) circle (2pt) node[above left] {$f(c')$};
    \filldraw (1.8,3.4) circle (2pt) node[above right] {$f(d')$};
    \filldraw (-4.5,2.7) circle (2pt) node[left, yshift=4pt] {$f(c)$};
    \filldraw (2.7,2.7) circle (2pt) node[right, yshift=4pt] {$f(d)$};

    \node[anchor=base] at (-5,-0.5) {$e_n$};
    \node[anchor=base] at (0,-0.5) {$e_{n+1}$};
    \node[anchor=base] at (5,-0.5) {$e_{n+2}$};
    \node[anchor=base] at (-4,-0.5) {$c'$};
    \node[anchor=base] at (1.8,-0.5) {$d'$};
    \node[anchor=base] at (-4.5,-0.5) {$c$};
    \node[anchor=base] at (2.7,-0.5) {$d$};
  \end{tikzpicture}
  \caption{The construction from the proof of \cref{weakApproxRolle}. $c'$ and $d'$ are found first using the constructive approximate IVT. Then $c$ and $d$ are found in the same way. Choosing $c'$ and $d'$ first ensures a minimum distance of $d' - c'$ between $c$ and $d$.}
  \label{fig:IVT}
\end{figure}

\begin{prop}[Weak Approximate Rolle's Theorem]\label[prop]{weakApproxRolle}
  Fix $a < b$ and a pointwise continuous function $f: [a,b] \to \mathbb{R}$ which satisfies $f(a) = f(b)$. For any $\e,\delta > 0$, we may find rational $c ,d$ with $a < c < d < b$ such that  $|d -c| < \delta$ and $\left|\frac{f(d)-f(c)}{d-c}\right| < \e$.
  %
\end{prop}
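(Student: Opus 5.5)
Subdivide $[a,b]$ into a fine grid $e_0=a<e_1<\dots<e_N=b$ of rational points with mesh much smaller than $\delta$. Use locatedness finitely many times to compare neighbouring secant slopes with $\pm\e$, and then, at a "turning" configuration, use Frank's approximate intermediate value theorem twice, as in \cref{fig:IVT}, to produce $c<d$ with nearly equal values.

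\textbf{Step 1 (the easy case).} For each $n<N$ consider the rational-endpoint secant slope $s_n=\frac{f(e_{n+1})-f(e_n)}{e_{n+1}-e_n}$. By locatedness, for each $n$ decide either $|s_n|<\e$ or $|s_n|>\e/2$. If some $n$ has $|s_n|<\e$ and $e_n,e_{n+1}$ lie strictly inside $(a,b)$, take $c=e_n$, $d=e_{n+1}$ and we are done. The endpoint intervals can be avoided by first shrinking to $[a',b']$ with $a',b'$ rational close to $a,b$. We then use pointwise continuity at $a$ and $b$ only to get $|f(a')-f(b')|$ small relative to $\e(b'-a')$, which is harmless.

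\textbf{Step 2 (a sign change).} Otherwise every $|s_n|>\e/2$, so each $s_n$ has a definite sign. Since $\sum_n s_n(e_{n+1}-e_n)=f(b')-f(a')\approx 0$, not all signs agree. Indeed, if all were positive the sum would exceed $\frac{\e}{2}(b'-a')$, a contradiction once $|f(b')-f(a')|$ is made smaller than that. This case split is decidable, because it is over finitely many decided signs. Hence we find $n$ with $s_n>\e/2$ and $s_{n+1}<-\e/2$, or the reverse. Say $f(e_{n+1})>\max(f(e_n),f(e_{n+2}))$ by a definite margin $\eta>0$.

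\textbf{Step 3 (the two-stage IVT).} Choose a value $y'$ strictly between $\max(f(e_n),f(e_{n+2}))$ and $f(e_{n+1})$. Frank's approximate IVT on $[e_n,e_{n+1}]$ and on $[e_{n+1},e_{n+2}]$ gives points $c'$ and $d'$ with $f(c'),f(d')$ within any prescribed tolerance of $y'$. We may take $c',d'$ rational by continuity and density.

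Next choose a second level $y$ strictly between $\max(f(e_n),f(e_{n+2}))$ and $y'$, using the margin so that $y$ is definitely below $f(c')$ and $f(d')$. Apply approximate IVT again on $[e_n,c']$ and $[d',e_{n+2}]$ to get rational $c\le c'$ and $d\ge d'$ with $|f(c)-y|,|f(d)-y|<\gamma$. Then $d-c\ge d'-c'$, and $d'-c'$ is a fixed positive rational known before $\gamma$ is chosen. So we may pick $\gamma<\e(d'-c')/2$, which gives $\left|\frac{f(d)-f(c)}{d-c}\right|<\e$. Finally $d-c\le e_{n+2}-e_n<\delta$ by the choice of mesh.

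\textbf{Main obstacle.} The crux is Step 3: guaranteeing a positive lower bound on $d-c$ before committing to the IVT tolerance. This is exactly why $c'$ and $d'$ are found first.

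A secondary point is confirming that $d'-c'>0$ is certified. This needs $c'<e_{n+1}<d'$ strictly, or at least $c'<d'$, which can fail if the IVT returns $c'=e_{n+1}$. I would handle this by choosing $y'$ definitely below $f(e_{n+1})$ and the tolerance below that gap. Then $f(c')\neq f(e_{n+1})$ forces $c'\ne e_{n+1}$, and rationality plus a locatedness comparison makes $c'<e_{n+1}$ decidable.
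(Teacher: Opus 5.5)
Your proposal is correct and follows essentially the same route as the paper. Both use a rational grid of mesh below $\delta/2$, finitely many locatedness decisions on the secant slopes, an adjacent sign change, and a two-stage application of Frank's approximate IVT in which $c',d'$ are found first, so that $d'-c'$ bounds $d-c$ from below before the final tolerance is fixed. Your variations only tidy up points the paper handles loosely: you pre-shrink to $[a',b']$, you use $\max(f(e_n),f(e_{n+2}))$ instead of the paper's case split $f(e_n)\ge f(e_{n+2})$ (which is not constructively decidable as stated), and you explicitly certify $c'<e_{n+1}<d'$.
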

\begin{proof}
  Let $e_0 = a$ and $e_N = b$. Fix rational $e_1 < e_2 < \dots < e_{N-1}$ in $(a,b)$ satisfying that for each $n < N-1$, $|e_{n+1} - e_n| = |e_{n+2} - e_{n+1}| < \frac{1}{2}\delta$. Let $r$ be the fixed value of $e_{n+1} - e_n$ for $n \notin \{0,N-1\}$.   

  For each $n < N$, let $\gamma_n = \left|\frac{f(e_{n+1})-f(e_n)}{e_{n+1}-e_{n}}\right|$. For each $n < N$, we have that either $\gamma_n < \frac{2}{3}\e$ or $\gamma_n > \frac{1}{3}\e$. If $\gamma_n < \frac{2}{3}\e$ for any $n$, then we have the required pair $c,d$ with $c = e_n$ (or a rational number sufficiently close to $a$ if $n = 0$) and $d = e_{n+1}$ (or a rational number sufficiently close to $b$ if $n = N-1$). 
  
  Now assume that for all $n < N$, $\gamma_n > \frac{1}{3}\e$. Note that in particular this implies that for each $n < N$, exclusively either $f(e_n) + \frac{1}{3}\e r < f(e_{n+1})$ or $f(e_n) - \frac{1}{3} \e r > f(e_{n+1})$. In the first case, say that $f$ is `going up at $e_n$' and in the second case say that $f$ is `going down at $e_n$'.

  It is straightforward to show that there must be an $n < N$ at which $f$ is going up and an $n < N$ at which $f$ is going down. This implies that one of the following must happen:
  \begin{itemize}
    \item There is an $n < N-1$ such that $f$ is going up at $e_n$ and going down at $e_{n+1}$.
    \item There is an $n < N-1$ such that $f$ is going down at $e_n$ and going up at $e_{n+1}$.
  \end{itemize}
  By replacing $f$ with $-f$ if necessary, we may assume without loss of generality that the first happens. Fix some such $n$. We may also assume without loss of generality that $f(e_n) \geq f(e_{n+2})$.  
  
  By the constructive approximate intermediate value theorem, we can find
  \begin{itemize}
    \item $c' \in (e_n,e_{n+1})$ such that $\left|f(c') - \frac{1}{2}(f(e_n) + f(e_{n+1})) \right| < \frac{1}{6}\e r$ and
  \item $d' \in (e_{n+1},e_{n+2})$ such that $\left|f(d') - \frac{1}{2}(f(e_n) + f(e_{n+1})\right| < \frac{1}{6}\e r$. 
  \end{itemize}
  Clearly $c' < d'$. Moreover, note that $f(e_{n+2}) \leq f(e_{n}) < f(c'),f(d') < f(e_{n+1})$. (See Figure~\ref{fig:IVT}.) 

  Now apply the constructive approximate intermediate value theorem again to find
  \begin{itemize}
    \item $c \in (e_n,c')$ such that $\left|f(c) - \frac{1}{2}(f(e_n) + \min(f(c'),f(d')))\right| < \frac{1}{2}\e |d'-c'|$ and 
    \item $d \in (d',e_{n+2})$ such that $\left|f(d) - \frac{1}{2}(f(e_n) + \min(f(c'),f(d')))\right| < \frac{1}{2}\e |d'-c'|$.
  \end{itemize}
  Since $f$ is pointwise continuous, we may assume that $c$ and $d$ are rational.  By construction $|f(d) - f(c)| < \e |d'-c'| < \e |d - c|$ and $|d-c| < \frac{1}{2}\delta + \frac{1}{2}\delta = \delta$, so we're done. 
\end{proof}

This immediately gives us the weak approximate mean value theorem by the standard argument.

\begin{cor}[Weak Approximate Mean Value Theorem] \label[cor]{WAMVT}
  For any $a < b$ and $f: [a,b] \to \mathbb{R}$, if $f$ is continuous on $[a,b]$, then for any $\e,\delta > 0$, there exists rational $c, d \in (a,b)$ such that $c < d$ and $d - c < \delta$ and
  \[
    \left| \frac{f(d) - f(c)}{d - c} - \frac{f(b)-f(a)}{b-a} \right| < \e.  \]
\end{cor}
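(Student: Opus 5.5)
We reduce to \cref{weakApproxRolle} by subtracting the secant line, as in the classical derivation of the Mean Value Theorem from Rolle's theorem. Given $a<b$ and a pointwise continuous $f:[a,b]\to\bbR$, set $m = \frac{f(b)-f(a)}{b-a}$ and define
\[
  h(x) = f(x) - m(x-a).
\]
Then $h$ is pointwise continuous on $[a,b]$, since it is the difference of a pointwise continuous function and an affine one; this step uses no choice and no case distinctions. A direct computation gives $h(a) = f(a)$ and $h(b) = f(b) - (f(b)-f(a)) = f(a)$, so $h(a)=h(b)$ and the hypotheses of \cref{weakApproxRolle} are met.

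Next we apply \cref{weakApproxRolle} to $h$ with the given $\e,\delta>0$. This produces rationals $c,d$ with $a<c<d<b$, $d-c<\delta$, and $\left|\frac{h(d)-h(c)}{d-c}\right|<\e$.

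It remains to unwind the definition of $h$. For any $c<d$,
\[
  \frac{h(d)-h(c)}{d-c} = \frac{f(d)-f(c)}{d-c} - m,
\]
because the affine parts contribute exactly $m(d-c)/(d-c) = m$. The bound from \cref{weakApproxRolle} is therefore exactly the inequality asserted in \cref{WAMVT}, and the rationality of $c,d$ and the constraint $d-c<\delta$ carry over unchanged.

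There is no real obstacle here; all of the constructive content, namely the finitely many locatedness decisions and the uses of Frank's approximate intermediate value theorem, sits inside \cref{weakApproxRolle}. The only point to check is that forming $h$ needs no choice or decidability, which is clear since $m$ is a fixed real number.
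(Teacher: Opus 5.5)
Your proof is correct and follows the same route as the paper, which derives this corollary from \cref{weakApproxRolle} ``by the standard argument.'' That argument is what you wrote: subtract the secant line to get $h$ with $h(a)=h(b)$, apply the weak approximate Rolle's theorem to $h$, and observe that the difference quotients of $h$ and $f$ differ by exactly the constant $m$.
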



\section{Weak Countable Choice Axioms Imply AMVT}
\label{AMVT}

Recall that the \textbf{axiom of countable choice}, or $\ACN$, says that for any family $(X_n)_{n \in \Nb}$ of inhabited sets, there is a function $f$ such that $f(n) \in X_n$ for each $n \in \Nb$. 

There are several weak forms of the axiom of countable choice. The following two are relevant to the current paper. 

\begin{itemize}
  \item $\ACNN$ is the axiom of choice for $\Nb$-indexed families of inhabited subcountable sets (i.e., for every family $(X_n)_{n \in \Nb}$ of subsets of $\Nb$, there is a function $f$ such that $f(n) \in X_n$ for each $n \in \Nb$).
  \item $\ACNT$ is the axiom of choice for $\Nb$-indexed families of inhabited subsets of $2 = \{0,1\}$.  
\end{itemize}

Clearly $\ACN \to \ACNN \to \ACNT$, and moreover $\mathrm{LEM} \to \ACNT$. 

%
%


We make use of the following concept due to Booij \cite{Booij}. 

\begin{defn}
  A \textbf{locator} for a real number $x$ is a function
  \[
    \ell : \{\ip{p}{q} \in \mathbb{Q}^2 : p < q\} \to \{0,1\}
  \]
  such that for any $p,q \in \bbQ$ with $p < q$,
  \begin{itemize}
    \item if $\ell(p,q) = 0$, then $p < x$ and
    \item if $\ell(p,q) = 1$, then $x < q$.
  \end{itemize}
  When $\ell(p,q) = 0$ we say that the locator \textbf{returns $p < x$} and when $\ell(p,q) = 1$ we say that the locator \textbf{returns $x < q$}.

  We say that $x$ \textbf{has} or \textbf{admits a locator} if there exists a locator for $x$.
\end{defn}


Note that in case $q < x < r$, a locator for $x$ can return $q < x$ or return $x < r$ equally well.  Therefore locators are not unique. The real numbers that have locators are precisely the Cauchy reals, so classically, every real has a locator, but it is also constructively consistent that there is a Dedekind real that does not admit a locator. The existence of a map choosing a locator for each real implies the weak limited principle of omniscience (see \cite[Lemma 3.10.1]{Booij} and the paragraph that follows). 





\begin{fact}\label[fact]{fact:real-loc}
  $\ACNT$ implies that every Dedekind real has a locator.
\end{fact}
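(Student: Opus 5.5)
Fix a Dedekind real $x$. The plan is to apply $\ACNT$ to a family of inhabited subsets of $\{0,1\}$ indexed by the pairs $p<q$ of rationals, and to read the resulting choice function directly as a locator for $x$.

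\textbf{Indexing.} The set $P = \{\ip{p}{q} \in \bbQ^2 : p < q\}$ is a decidable subset of $\bbQ^2$, and $\bbQ^2$ is countable. Since $P$ is inhabited and decidable, we can fix an explicit bijection (or at least a surjection) $\Nb \to P$ without any choice. So $\ACNT$ applies just as well to $P$-indexed families of inhabited subsets of $2$.

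\textbf{The family.} For $\ip{p}{q} \in P$, set
\[
  X_{p,q} = \{ i \in \{0,1\} : (i = 0 \wedge p < x) \vee (i = 1 \wedge x < q) \}.
\]
This is a subset of $2$ by separation. The key step is that each $X_{p,q}$ is inhabited, and this uses locatedness of Dedekind reals: for rationals $p < q$, either $p < x$ or $x < q$. For Dedekind cuts this is exactly the locatedness axiom of the cut, with $p$ in the lower cut or $q$ in the upper cut. In the first case $0 \in X_{p,q}$; in the second case $1 \in X_{p,q}$.

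\textbf{The locator.} Applying $\ACNT$ gives a function $\ell : P \to \{0,1\}$ with $\ell(p,q) \in X_{p,q}$ for every $\ip{p}{q} \in P$. Unwinding membership in $X_{p,q}$:
\begin{itemize}
  \item if $\ell(p,q) = 0$, then $p < x$;
  \item if $\ell(p,q) = 1$, then $x < q$.
\end{itemize}
So $\ell$ is a locator for $x$.

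\textbf{Main point of care.} The only nontrivial point is inhabitedness of each $X_{p,q}$, which is precisely where locatedness of Dedekind reals enters; reindexing from $P$ to $\Nb$ is routine. Everything is uniform in $x$, but choice is invoked separately for each $x$, so this does not yield a global map choosing locators. That is consistent with the remark above that such a map would imply WLPO.
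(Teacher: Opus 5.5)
Your proof is correct, and it takes a more direct route than the paper. The paper gives no standalone argument. It notes that regular Cauchy reals admit locators and treats the Fact as a restatement of the known result that $\ACNT$ makes the Dedekind and regular Cauchy reals coincide.

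You instead apply $\ACNT$ once, to the family of Boolean witnesses for locatedness of the cut indexed by pairs $p<q$. You then read the choice function off as the locator, with no detour through Cauchy sequences.

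Your route buys self-containment and transparency. A locator for $x$ is literally a choice function for that single family, which makes plain why choice for subsets of $2$, rather than of $\Nb$, is all that is needed.

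The paper's framing buys context, tying locators to the familiar Cauchy/Dedekind comparison. Note, though, that one natural proof of the cited coincidence in its $\ACNT$ form is exactly your argument followed by Booij's conversion of a locator into a rapidly converging Cauchy sequence. So your step is the more basic of the two.
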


A Dedekind real $x$ admits a locator if it is a regular Cauchy real (also called a rapidly converging Cauchy real), so Fact~\ref{fact:real-loc} restates the fact that $\ACNT$ implies that the Dedekind and (regular) Cauchy reals agree. 

\begin{defn}\label[defn]{defn:lifts-to-loc}
  A (possibly partial) function $f : {\subseteq}\Rb^n \to \Rb$ is said to \textbf{lift to locators} if there is a function $g$ such that for any reals $x_1,\dots,x_n$ and any $p_1,\dots,p_n$ with $p_i$ a locator of $x_i$, $g(\vec{x},\vec{p})$ is a locator of $f(\vec{x})$. 
\end{defn}

We record the following facts about locators, and use them freely in the sequel.

\begin{facts}\label[facts]{locatorFacts}
  \
  \begin{enumerate}
    \item There is a canonical assignment of locators for rational numbers. 
    \item\label{basic-ops} The functions $-x$, $x+y$, $x\cdot y$, $\frac{1}{x}$, $\max(x,y)$, and $\min(x,y)$ lift to locators. 
  \end{enumerate}
\end{facts}

We fix, once and for all, algorithms for constructing locators for reals that result from basic algebraic operations as in the second bullet above.

Note that lifting to locators is neither stronger nor weaker than continuity. Note also that by Fact \ref{locatorFacts} item~(\ref{basic-ops}), all rational functions with real coefficients lift to locators.


It is easy to see that if every continuous function lifts to locators, then every Dedekind real has a locator.

%

In the following two definitions, we implicitly fix a function $f: [a,b] \to \bbR$ which lifts to locators. We also implicitly fix a locator function $g$ (in the sense of Definition~\ref{defn:lifts-to-loc}) for the function $\left|\frac{f(x) - f(y)}{x-y} \right|$. Given a pair of rationals $x,y$, we will refer to the locator produced by $g$ for $\left|\frac{f(x) - f(y)}{x-y} \right|$ (using the canonical locators for $x$ and $y$) as `the' locator of $\left|\frac{f(x) - f(y)}{x-y} \right|$. 

\begin{defn}
  We say a pair $\langle x,y \rangle$ of rationals in $(a,b)$ with $x < y$ is \textbf{$n$-flat} if $\left| \frac{f(y) - f(x)}{y - x} \right| < 1 - 2^{-2n-1}$.
\end{defn}

Note that being $n$-flat is more restrictive than being $(n+1)$-flat.

\begin{defn}
  Say the pair $\langle x,y \rangle$ of rationals in $(a,b)$ with $x < y$ is \textbf{locator-$n$-flat} if the locator for $\left| \frac{f(y) - f(x)}{y - x} \right|$ returns $\left| \frac{f(y) - f(x)}{y - x} \right| < 1 - 2^{-2n-1}$ when presented the pair $\langle  1 - 2^{-2n}, 1 - 2^{-2n-1} \rangle$.
\end{defn}

Clearly locator-$n$-flatness implies $n$-flatness. Moreover, it is easy to show that $n$-flatness implies locator-$(n+1)$-flatness. The point of locator-$n$-flatness is that it is a decidable property.  


\begin{prop}[Approximate Rolle's Theorem]\label[prop]{approxRolle}
  Fix $a < b$ and a pointwise continuous function $f: [a,b] \to \mathbb{R}$ which lifts to locators and satisfies $f(a) = f(b)$, then for any $\e > 0$ there are sequences of rationals $(a_n)_{n \in \mathbb{N}}$ and $(b_n)_{n \in \mathbb{N}}$ in $(a,b)$ such that for any $n$,
  \begin{itemize}
    \item $a_n \leq a_{n+1} < b_{n+1} \leq b_n$,
    \item $|b_n - a_n| < (\frac{1}{2})^n$, and
    \item $\left| \frac{f(b_n) - f(a_n)}{b_n - a_n} \right| < \e$.
  \end{itemize}
\end{prop}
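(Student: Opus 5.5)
Replacing $f$ by $f/\e$ (which still lifts to locators by Fact~\ref{locatorFacts}, and still satisfies $f(a)=f(b)$), we may assume $\e = 1$. The flatness thresholds $1 - 2^{-2n-1}$ increase to $1$, so it suffices to build the sequences with each pair $\langle a_n, b_n\rangle$ being $n$-flat. The plan is to do this by recursion on $n$, maintaining the invariant:

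\emph{$\langle a_n,b_n\rangle$ is a pair of rationals in $(a,b)$ with $|b_n - a_n| < 2^{-n}$ that is locator-$n$-flat.}

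The point of using locator-flatness rather than flatness is that it is decidable. So at each stage we will not need to \emph{choose} a pair. Instead we fix an enumeration of $\{\ip{p}{q} \in \bbQ^2 : p<q\}$ and take the \emph{least} pair in the enumeration with the required decidable property. This needs only the knowledge that some such pair exists: given $\exists k\, P(k)$ with $P$ decidable, the least witness is uniquely determined by bounded search below any witness. Hence the recursion defines the sequences outright, with no countable or dependent choice.

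\textbf{Base case.} Apply \cref{weakApproxRolle} to $f$ on $[a,b]$ with $\e' = \tfrac14$ and $\delta = 1$. This yields rationals $a<c<d<b$ with $|d-c|<1$ and $\left|\frac{f(d)-f(c)}{d-c}\right| < \tfrac14$. Consider the locator query $\langle 1-2^{0}, 1-2^{-1}\rangle = \langle 0, \tfrac12\rangle$. Since the value is $< \tfrac12$, any locator must answer ``$<\tfrac12$'' unless the value exceeds $0$, so this is not yet enough. The fix is to use the implication recorded just after the definitions: $n$-flatness implies locator-$(n+1)$-flatness. Accordingly, I will shift indices and define the sequence from the pairs found at stage $n+1$, or equivalently choose the existence bound with margin so that the value lies below the lower endpoint of the relevant query pair; then the locator cannot return ``$p < x$'' and must return ``$x<q$''. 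So some pair satisfying the decidable predicate exists, and $\langle a_0,b_0\rangle$ is the least such.

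\textbf{Inductive step.} Given $\langle a_n,b_n\rangle$, which is $n$-flat, let $s = \frac{f(b_n)-f(a_n)}{b_n-a_n}$, so $|s| < 1 - 2^{-2n-1}$. Apply \cref{WAMVT} (i.e.\ \cref{weakApproxRolle} applied to $f(x) - s x$ on $[a_n,b_n]$) with $\e' = 2^{-2n-3}$ and $\delta = 2^{-n-1}$. This gives rationals $a_n < c < d < b_n$ with $d - c < 2^{-n-1}$ and slope of absolute value below
\[
1 - 2^{-2n-1} + 2^{-2n-3} < 1 - 2^{-2n-2},
\]
the lower endpoint of the query pair $\langle 1-2^{-2n-2}, 1-2^{-2n-3}\rangle$. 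Hence the locator must return ``$< 1-2^{-2n-3}$'', so $\langle c,d\rangle$ is locator-$(n+1)$-flat. Thus the decidable predicate ``$a_n\le p<q\le b_n$, $q-p<2^{-n-1}$, and $\langle p,q\rangle$ is locator-$(n+1)$-flat'' has a witness, and we let $\langle a_{n+1},b_{n+1}\rangle$ be the least one. Nesting, the length bound, and flatness follow immediately. Since locator-$n$-flatness implies $n$-flatness, which in turn gives slope $<1=\e$, the third bullet of the statement holds.

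I expect the main obstacle to be this choice-avoidance step, and it is exactly where the locator lifting hypothesis enters. Plain flatness ($n$-flatness) is not decidable, so without locators we could only assert that good subpairs exist at each stage; assembling them into a sequence would then require $\ACN$-type choice. The remaining work is routine margin bookkeeping, namely checking that the slack $2^{-2n-1}$ versus $2^{-2n-2}$ versus $2^{-2n-3}$ always leaves the value strictly below the lower query endpoint.
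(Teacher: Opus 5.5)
Your argument is correct and is essentially the paper's proof. You reduce to $\e=1$, use \cref{WAMVT} to show that an $n$-flat pair has an $n$-flat subpair (hence a locator-$(n+1)$-flat one) of length $<2^{-n-1}$, and take the least locator-$(n+1)$-flat pair in a fixed enumeration so that no countable choice is needed; your fixed margin $2^{-2n-3}$ in place of the paper's $1-2^{-2n-1}-|s|$ is harmless.

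The base-case detour and index shift are unnecessary. Your inductive step only uses that $\langle a_n,b_n\rangle$ is $n$-flat, so, as in the paper, it suffices to obtain a $0$-flat $\langle a_0,b_0\rangle$ by a single existential instantiation. A least-witness search at stage $0$ would in any case involve the undecidable condition $a<p$. Also, first shrink $\e$ to a positive rational before dividing by it, since an arbitrary real $\e$ need not have a locator.
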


\begin{proof}
  We may assume without loss of generality that $\e$ is rational, and in fact, by scaling, that $\e = 1$.  We will construct the sequence of pair $\langle a_n, b_n \rangle$ directly, ensuring as we do so that $\langle a_n, b_n \rangle$ is $n$-flat.

  Since $f$ is continuous, $a<b$, and $f(a)=f(b)$, we can find rational $a_0$ and $b_0$ such that $a < a_0 < b_0 < b$ and the pair $\langle a_0, b_0 \rangle$ is $0$-flat.

  Suppose that we have found rational $a_n,b_n \in (a,b)$ which are an $n$-flat pair. By \cref{WAMVT}, there exists rational $c,d \in (a_n,b_n)$ with $c < d$ and $|d-c| < \left(\frac{1}{2}\right)^{n+1}$ such that
\[
  \left| \frac{f(d) - f(c)}{d - c} - \frac{f(b_n) - f(a_n)}{b_n - a_n} \right | < 1 - 2^{-2n-1} - \left| \frac{f(b_n) - f(a_n)}{b_n - a_n}\right|. 
\]
In particular, this implies that $\left| \frac{f(d) - f(c)}{d -c} \right| < 1 - 2^{-2n-1}$. So the pair $\langle c,d\rangle$ is $n$-flat and thereby locator-$(n+1)$-flat.

Since it is decidable whether a pair $\langle c,d\rangle$ of rationals satisfies $a_n < c < d < b_n$ and $|d - c| < \left(\frac{1}{2}\right)^{n+1}$ and since, moreover, it is decidable whether such a pair is locator-$(n+1)$-flat, we can choicelessly choose $\langle a_{n+1},b_{n+1}\rangle$ to be the first locator-$(n+1)$-flat pair satisfying those inequalities in some fixed enumeration of the pairs of rational numbers. We then have that $\langle a_{n+1},b_{n+1} \rangle$ is $(n+1)$-flat, which supplies the induction hypothesis for the next step of the construction.  

Finally, by induction, this procedure builds the required sequences $(a_n)_{n \in \Nb}$ and $(b_n)_{n \in \Nb}$. 
\end{proof}

\begin{lem}\label[lem]{approxFlat}
  If $f:[a,b] \to \mathbb{R}$ is differentiable at $c \in (a,b)$ and there are sequences $(a_n)_{n \in \mathbb{N}}$ and $(b_n)_{n \in \mathbb{N}}$ limiting to $c$ from below and above, respectively, such that $\left|\frac{f(b_n)-f(a_n)}{b_n-a_n} \right| < \e$ for all $n$, then $|f'(c)| \leq \e$.
\end{lem}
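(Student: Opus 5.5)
The plan is to show $|f'(c)| \leq \e$ by refuting $|f'(c)| > \e$. Constructively, for reals $u,v$ the statement $u \leq v$ is equivalent to $\neg(u > v)$, so it suffices to derive a contradiction from $|f'(c)| > \e$. Set $L = f'(c)$, assume $|L| > \e$, and fix $\eta > 0$ with $|L| > \e + \eta$ (for instance $\eta = \frac{1}{2}(|L| - \e)$).

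The key step writes the difference quotient over $[a_n,b_n]$ as a convex combination of one-sided difference quotients at $c$. Differentiability at $c$ gives $\rho > 0$ such that for all $x \in (c-\rho, c+\rho)$,
\[
  |f(x) - f(c) - L(x-c)| \leq \eta |x - c|.
\]
Since $a_n \to c$ from below and $b_n \to c$ from above, pick $n$ with $c - \rho < a_n \leq c \leq b_n < c + \rho$; we also have $a_n < b_n$, since the quotient in the hypothesis is defined. Then
\[
  f(b_n) - f(a_n) - L(b_n - a_n) = \bigl(f(b_n) - f(c) - L(b_n - c)\bigr) - \bigl(f(a_n) - f(c) - L(a_n - c)\bigr),
\]
whose absolute value is at most $\eta(b_n - c) + \eta(c - a_n) = \eta(b_n - a_n)$. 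Dividing by $b_n - a_n > 0$ gives
\[
  \left| \frac{f(b_n)-f(a_n)}{b_n-a_n} - L \right| \leq \eta,
\]
and hence $\left|\frac{f(b_n)-f(a_n)}{b_n-a_n}\right| \geq |L| - \eta > \e$. This contradicts the hypothesis, so $|f'(c)| \leq \e$.

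The one point needing care is the splitting at $c$, which uses $a_n \leq c \leq b_n$ rather than any decision about where $a_n$ or $b_n$ sits relative to $c$. The estimate above holds uniformly without case analysis: $b_n - c \geq 0$ and $c - a_n \geq 0$ as reals, and the two error terms are bounded by $\eta$ times these nonnegative quantities. So no locatedness of $c$ is needed, and the only constructive subtlety is the move from ``not $>$'' to ``$\leq$'', which is valid for real numbers.
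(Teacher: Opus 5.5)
Your proof is correct and follows the paper's argument: both split the difference quotient at $c$ and apply the triangle inequality to the linearization errors at $a_n$ and $b_n$, using $(b_n - c) + (c - a_n) = b_n - a_n$. Your non-strict bound $\leq \eta|x-c|$ (where the paper writes a strict $<\delta|h|$, which fails at $h=0$) and your explicit passage from $\neg(|f'(c)|>\e)$ to $|f'(c)|\le\e$ fill in details the paper leaves implicit.
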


\begin{proof}
  Since $f$ is differentiable at $c$, we have that for any $\delta>0$, there is a $\gamma>0$ such that if $|h| < \gamma$, then $|f(c+h) - f(c) - f'(c)h|< \delta |h|$. By the triangle inequality, this implies that if $|h|,|h'| < \gamma$, then 
  \[
    |f(c+h) - f(c+h') - f'(c)(h - h')| < \delta(|h|+|h'|).
  \]
  Thus, under the assumptions in the lemma, we have that $|f'(c)| \leq \e$.
\end{proof}

\begin{thm}[AMVT with Locators]\label[thm]{AMVTforLift}
  For any $a < b$ and $f: [a,b] \to \mathbb{R}$, if $f$ is pointwise continuous on $[a,b]$, pointwise differentiable on $(a,b)$ and lifts to locators, then for any $\e > 0$ there exists $c \in (a,b)$ such that 
  \[
    \left| f'(c) - \frac{f(b)-f(a)}{b-a} \right| < \e.
  \]
\end{thm}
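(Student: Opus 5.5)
We reduce to the Approximate Rolle's Theorem (\cref{approxRolle}) by the standard subtraction of a secant line. Let $m = \frac{f(b)-f(a)}{b-a}$ and set $h(x) = f(x) - m(x-a)$. Then $h$ is pointwise continuous on $[a,b]$, pointwise differentiable on $(a,b)$ with $h'(x) = f'(x) - m$, and $h(a) = h(b) = f(a)$. Moreover $h$ lifts to locators: given locators for $x$ and for $f(x)$, we obtain locators for $x - a$, for $m(x-a)$ and for the difference using \cref{locatorFacts}, item~(\ref{basic-ops}). For the constant $m$ we must produce one locator once and for all. Since $f$ lifts to locators and $a,b$ are real, some care is needed here. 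A clean route is to note that \cref{approxRolle} only uses the locator structure on the difference quotients $\left|\frac{h(y)-h(x)}{y-x}\right|$ at rational $x<y$. These equal $\left|\frac{f(y)-f(x)}{y-x} - m\right|$, so it suffices to fix a locator for $m$ (or replace $m$ by a rational within $\e/3$ of it, which is constructively available since every real is approximable by rationals) and compose with the lifted operations.

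To avoid depending on a locator for $m$ itself, I would instead choose a rational $q$ with $|q - m| < \e/3$. I would then apply a slightly generalized reading of \cref{approxRolle} to $h(x) = f(x) - q(x-a)$, which satisfies $|h(b)-h(a)| < \frac{\e}{3}(b-a)$ rather than $h(a)=h(b)$. The proof of \cref{approxRolle} only used $f(a)=f(b)$ to obtain an initial $0$-flat pair (after scaling $\e$ to $1$) and to apply \cref{WAMVT}. Both steps go through when the overall slope of $h$ is below $\e/3$, provided we run the construction with target tolerance $2\e/3$. Either way, we obtain rational sequences $a_n \le a_{n+1} < b_{n+1} \le b_n$ in $(a,b)$ with $b_n - a_n < 2^{-n}$ and $\left|\frac{h(b_n)-h(a_n)}{b_n-a_n}\right| < 2\e/3$.

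Next we extract $c$. The sequences $(a_n)$ and $(b_n)$ are nested with widths tending to $0$, so they are Cauchy and converge to a common real $c$, with no choice needed since the sequences are given explicitly. To see that $c \in (a,b)$, note that $a < a_0 \le c \le b_0 < b$. We need $a_n$ and $b_n$ to approach $c$ from below and above, which holds since $a_n \le c \le b_n$. \cref{approxFlat} is phrased with limits from below and above, and its proof only uses $|h|,|h'|$ small for $h = a_n - c \le 0 \le b_n - c$. The degenerate case $a_n = c$ or $b_n = c$ is harmless because the same estimate applies. Applying \cref{approxFlat} to $h$ at $c$ yields $|f'(c) - q| = |h'(c)| \le 2\e/3$. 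Hence $|f'(c) - m| \le 2\e/3 + |q-m| < \e$.

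The main obstacle is the first step: making \cref{approxRolle} applicable. We must either verify that $h$ genuinely lifts to locators without a locator for the possibly non-Cauchy real $m$, or carefully check that the proof of \cref{approxRolle} tolerates $h(a) \ne h(b)$ with a small secant slope. I would take the rational-$q$ route. The key checks there are the base case, where pointwise continuity at $a$ and $b$ gives rational $a_0 < b_0$ near the endpoints with secant slope below $2\e/3$, and the observation that the inductive step never used $h(a)=h(b)$.
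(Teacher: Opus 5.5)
Your argument is correct. Its skeleton is the paper's: subtract the secant, run the nested flat-pair construction of \cref{approxRolle}, take the common limit $c$, and apply \cref{approxFlat}. The real difference is the reduction step. The paper replaces $f$ by $f(x)-f(a)-m(x-a)$ with $m=\frac{f(b)-f(a)}{b-a}$ and cites \cref{approxRolle} as stated, tacitly assuming the new function lifts to locators. That needs a locator for $m$, which need not exist when $a$ or $b$ has none (e.g.\ $f(x)=x^2$ and $a=0$ give $m=b$). Your rational $q$ avoids this, because the only locators the construction consults are those of $\left|\frac{f(y)-f(x)}{y-x}-q\right|$ at rational $x<y$. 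To make the lifting hypothesis literal, use $f(x)-qx$, which differs from your $h$ by the constant $qa$. The price is that you must reopen the proof of \cref{approxRolle} instead of citing it, and you do so correctly. The inductive step never uses the endpoint values. The bound $|m-q|<\e/3$ is exactly what lets continuity at $a$ and $b$ produce an initial pair that is $0$-flat for tolerance $2\e/3$, i.e.\ with secant slope below $\e/3$ (not below $2\e/3$, as your last paragraph says). For the scaling step it is cleanest to fix a rational tolerance $\eta<2\e/3$ first and then choose $q$ with $|q-m|<\min(\eta/2,\e-\eta)$. Alternatively, you can keep \cref{approxRolle} as a black box: first choose rationals $a<a'<b'<b$ with $\left|\frac{f(b')-f(a')}{b'-a'}-m\right|<\e/2$. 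Then $f(a')$, $f(b')$ and the new slope all have locators, and the paper's reduction works verbatim on $[a',b']$.
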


\begin{proof}
  By replacing $f(x)$ with $f(x)-f(a)-\frac{f(b)-f(a)}{b-a}(x-a)$, we may assume that $f(a)=f(b)=0$. By extending $f$ with the value $0$, we may assume that it is actually a continuous function on all of $\mathbb{R}$.  By assumption, $f$ lifts to locators, so by Proposition \ref{approxRolle}, we can find sequences $(a_n)_{n \in \mathbb{N}}$ and $(b_n)_{n \in \mathbb{N}}$ of rational numbers such that $a_n \leq a_{n+1} < b_{n+1} \leq b_n$, $|b_n - a_n| < (\frac{1}{2})^n$, and $\left| \frac{f(b_n) - f(a_n)}{b_n - a_n} \right| < \frac{1}{2} \e$.

  Let $c$ be the element of $(a,b)$ that the sequences $(a_n)$ and $(b_n)$ both converge to. By Lemma~\ref{approxFlat}, we have that $|f'(c)| \leq \frac{1}{2}\e < \e$, as required.
\end{proof}

\begin{remark}
One thing to note is that \cite{HV} proves that the \emph{exact} Rolle's theorem (and thereby the exact mean value theorem) is provable in RCA$_0$. This relies on an application of a single non-computable instance of LEM (i.e., either $f(x)$ is nowhere locally constant or is it somewhere locally constant). This is similar to how the exact intermediate value theorem is provable in RCA$_0$ but not constructively provable. And just like with the intermediate value theorem, the non-uniform computable version can be rephrased constructively by turning the instance of LEM into an assumption. In particular, the following statement should be constructively provable: Fix $a < b$ and a pointwise continuous function $f : [a,b] \to \Rb$ such that $f(a) = f(b)$, $f$ is pointwise differentiable on $(a,b)$, and $f$ lifts to locators. If for every $a',b'$ with $a \leq a' < b' \leq b$, there are $x,y \in (a',b')$ such that $f(x) < f(y)$, then there is a $c \in (a,b)$ such that $f'(c) = 0$.
\end{remark}

\cref{AMVTforLift} has the following corollary, the analogue of which we showed to not hold in neutral constructive mathematics.  It says that the Constancy Principle holds for functions which lift to locators.

\begin{cor}[Constancy Principle with Locators]
  If $f: \bbR \to \bbR$ lifts to locators and $f'(x) = 0$ for all $x$, then $f$ is constant.
\end{cor}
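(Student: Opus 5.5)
To show that $f$ is constant, I will fix reals $x < y$ and prove $f(x) = f(y)$. Equality of reals is a negative statement: it means $\neg(f(x) \neq f(y))$, that is, $\neg(|f(x)-f(y)| > 0)$. So it suffices to assume $|f(y) - f(x)| > 0$ and derive a contradiction. Under that assumption, set
\[
  \e = \frac{|f(y)-f(x)|}{2(y-x)} > 0.
\]

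Next I restrict $f$ to $[x,y]$. Differentiability on all of $\bbR$ gives pointwise continuity on $[x,y]$ and pointwise differentiability on $(x,y)$. The restriction still lifts to locators: the same lifting function $g$ works, since the domain has only become smaller. Now apply \cref{AMVTforLift} to $f|_{[x,y]}$ with this $\e$. It produces some $c \in (x,y)$ with
\[
  \left| f'(c) - \frac{f(y)-f(x)}{y-x} \right| < \e.
\]
Since $f'(c) = 0$, this inequality reads $\frac{|f(y)-f(x)|}{y-x} < \e = \frac{|f(y)-f(x)|}{2(y-x)}$. That is impossible for a positive quantity, so we have our contradiction.

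I do not expect a substantial obstacle here. The one point to check is that concluding equality from $\neg(f(x)\neq f(y))$ is constructively valid, and it is, because apartness on the reals is tight. Two further details are routine. First, restricting to $[x,y]$ preserves the lifting to locators. Second, the derivative of the restriction at interior points agrees with $f'$, since differentiability there is a local property.

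Finally, constancy should mean $f(x) = f(y)$ for all $x,y$, not just for $x<y$. For arbitrary $x,y$, the same argument runs with the roles swapped if $y < x$. Without knowing the order of $x$ and $y$, I argue as follows. Assume $f(x) \neq f(y)$. Pointwise continuity of $f$ makes $x \neq y$ (if $x = y$ then $f(x) = f(y)$), and $x \neq y$ on the reals means $x<y$ or $y<x$. Either case yields a contradiction by the argument above. Hence $f(x)=f(y)$.
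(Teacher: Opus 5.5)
Your proof is correct and is essentially the paper's argument: for $x<y$, assume $f(x)$ and $f(y)$ are apart, apply \cref{AMVTforLift} on $[x,y]$ to get a point $c$ with $f'(c)\neq 0$, and conclude $f(x)=f(y)$ by tightness. The only difference is the passage to arbitrary pairs, where the paper instead uses that every $z$ satisfies $0<z$ or $z<1$, giving $f(z)=f(0)=f(1)$. In your version, the parenthetical ``if $x=y$ then $f(x)=f(y)$'' only yields $\neg(x=y)$, not $x<y \lor y<x$. This still suffices because you are deriving $\bot$; alternatively, pointwise continuity at $x$ together with cotransitivity gives genuine apartness of $x$ and $y$.
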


\begin{proof}
  First, note that differentiability at a point implies continuity at a point, so $f(x)$ is pointwise continuous.

  Fix $x < y$ and assume for the sake of contradiction that $f(x) < f(y)$.  By the preceding theorem, we can find a $c \in (x,y)$ such that $f'(c) > 0$, contradicting the fact that $f'(c) = 0$.  By the same argument it cannot be the case that $f(x) > f(y)$. This implies that $f(x) = f(y)$.

  So, for any real $z$, we have that either $0 < z$, in which case $f(z) = f(0) = f(1)$, or $z < 1$, in which case $f(z) = f(1) = f(0)$.  So $f$ is a constant function.
\end{proof}

This shows that the law of bounded change applies to functions that are also assumed to lift to locators. 

\begin{cor}[Law of Bounded Change with Locators]
  Let $f$ be a pointwise differentiable function that lifts to locators and $|f'(x)| \leq M$ for all $x \in [a,b]$. Then $|f(b) - f(a)| \leq M|b - a|$.
\end{cor}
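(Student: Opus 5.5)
The plan is to derive the bound from \cref{AMVTforLift} by a proof by contradiction. This suffices because the target inequality $|f(b)-f(a)| \leq M|b-a|$ is the negation of $|f(b)-f(a)| > M|b-a|$. Constructively, $x \leq y$ is equivalent to $\neg(y < x)$ for Dedekind reals, so it is enough to refute the strict inequality. No positive witness is needed.

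We may assume $a < b$. The case $a = b$ is trivial. For general $a,b$ with $a \neq b$ one could swap them, but since the statement is about an interval $[a,b]$, we take $a \le b$. If we only know $a \le b$, we can argue by contradiction anyway: assuming $|f(b)-f(a)| > M|b-a| \geq 0$ forces $f(b) \neq f(a)$. Then by pointwise continuity of $f$ we get $a \neq b$, and hence $a < b$. This handles the degenerate case within the same contradiction argument.

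Now suppose $|f(b)-f(a)| > M(b-a)$, and set
\[
\e = \frac{|f(b)-f(a)|}{b-a} - M > 0.
\]
As noted in the proof of the Constancy Principle with Locators, differentiability at each point gives pointwise continuity on $[a,b]$. Since $f$ also lifts to locators, \cref{AMVTforLift} applies with this $\e$ and yields some $c \in (a,b)$ with
\[
\left| f'(c) - \frac{f(b)-f(a)}{b-a} \right| < \e.
\]
By the reverse triangle inequality,
\[
|f'(c)| > \frac{|f(b)-f(a)|}{b-a} - \e = M,
\]
which contradicts the hypothesis $|f'(c)| \le M$. Hence $\neg(|f(b)-f(a)| > M|b-a|)$, and therefore $|f(b)-f(a)| \leq M|b-a|$.

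The only step needing care is checking the hypotheses of \cref{AMVTforLift}. The statement says only that $f$ is pointwise differentiable and lifts to locators. So we should interpret differentiability as holding on $[a,b]$ (or on an open set containing it, with one-sided derivatives at the endpoints), which gives continuity on $[a,b]$ and differentiability on $(a,b)$. If $f$ is defined on a larger domain, we also need its restriction to $[a,b]$ to still lift to locators. This is immediate, since the same locator function $g$ works on the restricted domain. Everything else is routine, because the contradiction argument avoids any decision about the sign of $f(b)-f(a)$.
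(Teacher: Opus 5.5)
Your proof is correct and is essentially the paper's argument. The paper applies \cref{AMVTforLift} for every $\e>0$ to get $|f(b)-f(a)| \leq (M+\e)|b-a|$ and lets $\e$ shrink, whereas you apply it once, with the $\e$ taken from the assumed strict inequality; this is the same reasoning in contrapositive form, and it is constructively valid because $x \leq y$ is $\neg(y < x)$. Your extra handling of the case where only $a \leq b$ is known, which the paper does not address, is also sound as long as your $\neq$ is read as apartness: continuity turns $|f(b)-f(a)|>0$ into $|b-a|>0$, and together with $a \leq b$ this gives $a<b$.
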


\begin{proof}
  By the approximate mean value theorem, we have 
  \[
    |f(b) - f(a)| \leq (M + \e) |b - a|
  \]
  for every $\e > 0$.  In particular $|f(b) - f(a)| \leq M|b - a|$.
\end{proof}

Thus we have shown that the conclusions of the AMVT, the law of bounded change, and constancy principle apply to continuous functions that lift to locators.  Since $\ACNT$ implies that all continuous functions lift to locators, we conclude the following corollary.

\begin{cor}
  $\ACNT$ (and therefore also $\ACN$ and $\ACNN$) implies the approximate mean value theorem, the law of bounded change, and the constancy principle.
\end{cor}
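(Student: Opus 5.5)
The corollary follows by chaining \cref{fact:real-loc} with \cref{AMVTforLift} and its two corollaries. The only step that needs real work is showing that under $\ACNT$ every function $f:[a,b]\to\Rb$ lifts to locators in the sense of \cref{defn:lifts-to-loc}; continuity is not even needed for this step. Once that is established, the AMVT is exactly \cref{AMVTforLift}. The Law of Bounded Change and the Constancy Principle then follow from the Law of Bounded Change with Locators and the Constancy Principle with Locators. The parenthetical claim is immediate from the implications $\ACN \to \ACNN \to \ACNT$ recorded earlier.

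To produce the lifting, I would not try to choose a locator for $f(x)$ separately for each real $x$. That would be a choice over an uncountable family, which $\ACNT$ does not give. Instead, I would use \cref{fact:real-loc} in a form that is itself uniform: I would check that its proof yields the implication ``every Dedekind real has a locator'' for each real separately. The lifting function $g$ in \cref{defn:lifts-to-loc} is only required to exist as a function, so I would argue as follows. Since $\ACNT$ gives, for each real $y$, the existence of a locator, the relation $R(y,\ell)$ (``$\ell$ is a locator for $y$'') is total. However, getting a function from a total relation needs choice over $\Rb$. The better route is to apply $\ACNT$ directly to the countable family indexed by rational pairs $(p,q)$. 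For fixed $x$ with locator $\vec p$, set
\[
X_{(p,q)}=\{i\in\{0,1\} : (i=0 \wedge p<f(x))\vee(i=1 \wedge f(x)<q)\}.
\]
Each $X_{(p,q)}$ is inhabited by locatedness of the Dedekind real $f(x)$, and a choice function for this family is a locator.

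The main obstacle is definability: the function $g$ must be a genuine function of $(x,\vec p)$, while $\ACNT$ only asserts that some choice function exists. To get around this, I would take $g$ to be determined modulo the fact that the paper uses lifting only through $\left|\frac{f(x)-f(y)}{x-y}\right|$ at \emph{rational} $x,y$ in \cref{approxRolle}. Indeed, the proof of \cref{AMVTforLift} only needs locators for the countably many reals $f(q)$ with $q\in\bbQ\cap[a,b]$. So I would weaken what is needed: apply $\ACNT$ once to the single countable family indexed by triples $(q,p,p')$, with $q\in\bbQ\cap[a,b]$ and rationals $p<p'$, choosing a bit that witnesses $p<f(q)$ or $f(q)<p'$. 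This yields simultaneously a locator $\ell_q$ for every $f(q)$. Then Facts~\ref{locatorFacts} give canonical locators for $\left|\frac{f(y)-f(x)}{y-x}\right|$ at rational pairs, and the proof of \cref{approxRolle} goes through verbatim. Hence \cref{AMVTforLift} and both corollaries hold for all pointwise continuous (respectively differentiable) $f$, which gives the stated corollary.
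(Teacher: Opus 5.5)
Your final argument is correct, but it takes a different route from the paper.

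The paper simply asserts that $\ACNT$ implies every continuous function lifts to locators in the sense of \cref{defn:lifts-to-loc}. It then cites \cref{AMVTforLift} and its two corollaries as black boxes.

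You instead observe that the lifting hypothesis is used only to make locator-$n$-flatness decidable in \cref{approxRolle}. It therefore enters only through locators for the values of the function at rational arguments, and one application of $\ACNT$ to the family indexed by $(q,p,p')$ supplies all of these at once. The remaining steps (the initial $0$-flat pair, the pairs from \cref{WAMVT}, the limit point and \cref{approxFlat}) need neither choice nor locators. The Law of Bounded Change and the Constancy Principle then follow from the AMVT exactly as in the paper's corollaries.

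The paper's route is shorter and, if granted, yields the stronger intermediate fact that every continuous function lifts. But no argument is given for that fact, and it does not follow from \cref{fact:real-loc} alone, for the reason you identify. A locator for each $f(x)$ separately is not a function of $(x,\vec p)$. Computing a locator for $f(x)$ from locators at rational points would in general need a modulus of continuity at $x$, which pointwise continuity does not supply.

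Your route uses only what is actually justified, at the cost of reopening the proofs instead of citing the theorems. In effect it shows that \cref{AMVTforLift} holds under the weaker hypothesis that the values of the function at rationals admit a family of locators.

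Some tidying is needed:

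\begin{enumerate}
\item Drop the opening claim that under $\ACNT$ every function, continuous or not, lifts to locators. You never establish it, and the final argument does not use it.
\item Make the choice for the normalized function $F(x)=f(x)-f(a)-\frac{f(b)-f(a)}{b-a}(x-a)$, to which \cref{approxRolle} is actually applied, rather than for $f$. Alternatively, also fix locators for $a$, $b$, $f(a)$, $f(b)$ by \cref{fact:real-loc}.
\item Membership of a rational in $[a,b]$ is not decidable for arbitrary real endpoints, so $\bbQ\cap[a,b]$ is not evidently an $\Nb$-indexed family. Either index over all $q\in\bbQ$ using $F(\max(a,\min(q,b)))$, which is exactly the ``extension by $0$'' in the proof of \cref{AMVTforLift}, or restrict to rationals in $[a_0,b_0]$ after the initial $0$-flat pair is fixed.
\end{enumerate}
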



Many constructivists consider countable choice to be benign and therefore use it often without concern.  Many others prefer the more subdued choice principles discussed above.  Our results show that, in either case, this is enough to push the situation closer to classical mathematics than to neutral constructive mathematics on some very basic and natural questions in real analysis.

Booij describes locators as a way to compute with real numbers.  Taking this philosophy for granted, our methods say that it takes very small (but nontrivial!) amounts of countable choice to gain enough of a computational grasp of real-valued functions to be able to approximate witnesses to the Mean Value Theorem.

\section{Darboux's Theorem}

We now apply our results to prove analogues of Darboux's Theorem.  We remark that, modulo the specific version of the Mean Value Theorem invoked, what follows is essentially a standard proof of Darboux's Theorem.  Recall that it states that if $f(x)$ is differentiable, then $f'(x)$ satisfies the intermediate value theorem.  

Throughout this section, let $f$ be differentiable on an interval $[a, b]$ and let $f'$  be its derivative.  Set $c = \frac{1}{2}(a + b)$ and the functions
\[
  \alpha(t) = \begin{cases}
    a & \quad \text{if } a \leq t \leq c \\
    2t - b & \quad \text{if } c \leq t \leq b
  \end{cases}
\]
and 
\[
  \beta(t) = \begin{cases}
    2t - a & \quad \text{if } a \leq t \leq c \\
    b & \quad \text{if } c \leq t \leq b
  \end{cases}
\]
We see that $a \leq \alpha(t) \leq \beta(t) \leq b$ for all $t \in (a,b)$.  Now we define
\[
  g(t) = \frac{f(\beta(t)) - f(\alpha(t))}{\beta(t) - \alpha(t)}
\]
for $t \in (a, b)$.  Then $g$ is continuous on $(a,b)$.  Since $g(t_n) \to f'(a)$ when $t_n \to a$ and $g(t_n) \to f'(b)$ when $t_n \to b$, we can extend $g$ to a continuous function on $[a,b]$ by defining $g(a) = f'(a)$ and $g(b) = f'(b)$.  We are now positioned to state and prove the main theorems of this section.


\begin{thm}[Weak Approximate Darboux]
  If $y \in (f'(a), f'(b))$ and $\e > 0$, then there is $x_0, x_1 \in [a,b]$ with $x_0 < x_1$ such that $|\frac{f(x_1) - f(x_0)}{x_1 - x_0} - y| < \e$.  
\end{thm}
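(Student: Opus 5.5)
The plan is to apply Frank's constructive approximate intermediate value theorem to the auxiliary function $g$ defined just before the statement, and then read off the secant slope. Everything needed about $g$ is recorded in the setup: $g$ is pointwise continuous on $[a,b]$, with $g(a)=f'(a)$ and $g(b)=f'(b)$. Since $y\in(f'(a),f'(b))$, we have $g(a)<y<g(b)$.

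First I will invoke Frank's approximate IVT for pointwise continuous functions, which is already used in the proof of \cref{weakApproxRolle}. It gives $t\in[a,b]$ with $|g(t)-y|<\e/2$. If $t$ happens to be an endpoint, the value of $g(t)$ is a derivative rather than a secant slope, so I want $t$ strictly inside $(a,b)$. To arrange this, I will first use pointwise continuity of $g$ at $a$ and at $b$ to find $a<a'<b'<b$ with $g(a')<y<g(b')$. This works because $g(a)<y$ is an open condition, and likewise at $b$. Then I apply the approximate IVT on $[a',b']$, which yields $t\in[a',b']\subseteq(a,b)$.

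Next I translate back to $f$. For $t\in(a,b)$ we have $\alpha(t)<\beta(t)$. If $t\le c$, then $\beta(t)-\alpha(t)=2(t-a)>0$; if $t\ge c$, then $\beta(t)-\alpha(t)=2(b-t)>0$. The two formulas agree at $c$, and in fact $\beta(t)-\alpha(t)=\min(2(t-a),2(b-t))$, so no case split on $t$ is actually needed to see positivity. Setting $x_0=\alpha(t)$ and $x_1=\beta(t)$, both lie in $[a,b]$, satisfy $x_0<x_1$, and
\[
  \left|\frac{f(x_1)-f(x_0)}{x_1-x_0}-y\right| = |g(t)-y| < \e .
\]

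The main obstacle is making sure $g$ is genuinely pointwise continuous on all of $[a,b]$ without deciding where $t$ sits relative to $c$. The piecewise definitions of $\alpha$ and $\beta$ should be rewritten as $\alpha(t)=\max(a,2t-b)$ and $\beta(t)=\min(2t-a,b)$, which are uniformly continuous and require no case distinction. Then $g$ is a quotient of continuous functions with a denominator that is positive on $(a,b)$, so it is continuous there. Continuity at the endpoints is exactly the claim $g(t_n)\to f'(a)$, which follows from differentiability of $f$ at $a$ via the two-sided estimate in the proof of \cref{approxFlat}. Even so, I only need continuity at $a$ and $b$ in the weak form used to choose $a'$ and $b'$, plus continuity on $[a',b']$, which is immediate.
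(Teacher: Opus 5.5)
Your argument is correct, and it is more direct than the paper's. The paper also applies the approximate IVT to $g$ to get $t_0$ with $|g(t_0)-y|<\e/2$. It then applies the WAMVT on $[\alpha(t_0),\beta(t_0)]$ to produce $x_0<x_1$ inside that interval whose secant slope is within $\e/2$ of $g(t_0)$.

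You observe that this second step is unnecessary for the statement as written. $g(t)$ already is the secant slope of $f$ over $[\alpha(t),\beta(t)]$, so $x_0=\alpha(t)$ and $x_1=\beta(t)$ work directly once $t\in(a,b)$.

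Your route also handles two constructive points that the paper passes over quickly:
\begin{enumerate}
\item You explicitly arrange $t\in(a,b)$ by first moving to $[a',b']\subseteq(a,b)$ with $g(a')<y<g(b')$. This needs only that the one-sided difference quotients at $a$ and $b$ converge to $f'(a)$ and $f'(b)$.
\item You only ever evaluate $g$ on $[a',b']$, where $\beta(t)-\alpha(t)\geq\min(2(a'-a),2(b-b'))>0$. So you never need $g$ to be a well-defined pointwise continuous function on all of $[a,b]$. Constructively $\{a\}\cup(a,b)\cup\{b\}$ is not all of $[a,b]$, and the paper's ``extension'' of $g$ to the endpoints leaves this implicit.
\end{enumerate}

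What the paper's extra WAMVT step could buy, if its $\delta$ were used, is a stronger conclusion: rational $x_0<x_1$ strictly inside $(a,b)$ with $x_1-x_0$ as small as desired. Your $x_1-x_0=\min(2(t-a),2(b-t))$ is not controlled. For the theorem as stated, though, your direct argument suffices.
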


\begin{proof}
  Find, by the constructive IVT, a $t_0 \in (a,  b)$ such that $|g(t_0) - y| < \frac{\e}{2}$.  By the WAMVT, we can find $x_0, x_1 \in (\alpha(t_0), \beta(t_0))$ such that $|\frac{f(x_1) - f(x_0)}{x_1 - x_0} - g(t_0)| < \frac{\e}{2}$.  Then we have $|\frac{f(x_1) - f(x_0)}{x_1 - x_0} - y| \leq |g(t_0) - y| + |\frac{f(x_1) - f(x_0)}{x_1 - x_0} - g(t_0)| < \e$.
\end{proof}


\begin{thm}[Approximate Darboux]
  Assuming $\ACNT$, if $y \in (f'(a), f'(b))$ and $\e > 0$, then there is $x_0 \in [a,b]$ such that $|f'(x_0) - y| < \e$.  
\end{thm}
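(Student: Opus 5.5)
The plan is to mirror the proof of the Weak Approximate Darboux theorem, replacing the WAMVT by the AMVT, which is available under $\ACNT$ by the corollary at the end of the previous section.

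\textbf{Step 1.} Since $y \in (f'(a), f'(b))$, and $g$ is continuous on $[a,b]$ with $g(a) = f'(a)$ and $g(b) = f'(b)$, Frank's constructive approximate intermediate value theorem gives some $t_0 \in (a,b)$ with $|g(t_0) - y| < \frac{\e}{2}$.

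\textbf{Step 2.} We need $\alpha(t_0) < \beta(t_0)$, so that the quotient defining $g(t_0)$ is a genuine difference quotient over a nondegenerate interval. This holds for every $t \in (a,b)$: if $t \le c$ then $\beta(t) - \alpha(t) = 2(t-a) > 0$, and if $t \ge c$ then it equals $2(b-t) > 0$. The two cases overlap, so for a general real $t_0$ we handle this constructively by locatedness, comparing $t_0$ with rationals near $c$. Alternatively we can note that $\beta(t)-\alpha(t) = \min(2(t-a), 2(b-t))$, which avoids the case split entirely.

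\textbf{Step 3.} Apply the AMVT (valid under $\ACNT$) to $f$ on $[\alpha(t_0), \beta(t_0)]$. Its hypotheses hold: $f$ is differentiable, hence pointwise continuous on $[a,b]$, and it is differentiable on the open subinterval. This gives $x_0 \in (\alpha(t_0), \beta(t_0)) \subseteq [a,b]$ with
\[
  \left| f'(x_0) - \frac{f(\beta(t_0)) - f(\alpha(t_0))}{\beta(t_0) - \alpha(t_0)} \right| < \frac{\e}{2},
\]
that is, $|f'(x_0) - g(t_0)| < \frac{\e}{2}$.

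\textbf{Step 4.} The triangle inequality then gives $|f'(x_0) - y| < \e$.

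\textbf{Main obstacle.} The substantive point is making sure the AMVT under $\ACNT$ applies to the restriction of $f$ to $[\alpha(t_0),\beta(t_0)]$. That corollary was stated for arbitrary $a<b$ and arbitrary continuous, differentiable $f$ on that interval, since $\ACNT$ makes every continuous function lift to locators. So the restriction is covered without further work.

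The remaining care is in the continuity of $g$ at the endpoints, which the paper has already asserted. I would take that as given from the setup of the section.
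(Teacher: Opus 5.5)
Your proposal is correct and follows the paper's proof essentially verbatim. The constructive IVT applied to $g$ gives $t_0$ with $|g(t_0)-y|<\frac{\e}{2}$, the AMVT (available under $\ACNT$) on $[\alpha(t_0),\beta(t_0)]$ gives $x_0$ with $|f'(x_0)-g(t_0)|<\frac{\e}{2}$, and the triangle inequality finishes; your Step 2 observation that $\beta(t)-\alpha(t)=\min(2(t-a),2(b-t))>0$ is a harmless detail the paper leaves implicit.
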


\begin{proof}
  Find, by the constructive IVT, a $t_0 \in (a,  b)$ such that $|g(t_0) - y| < \frac{\e}{2}$.  By the AMVT, we can find $x_0 \in (\alpha(t_0), \beta(t_0))$ such that $|f'(x_0) - g(t_0)| < \frac{\e}{2}$.  Then we have $|f'(x_0) - y| \leq |g(t_0) - y| + |f'(x_0) - g(t_0)| < \e$.
\end{proof}









\appendix

\section{Proof of Lemma~\ref{gConstruction}}
\label{sec:proof-of-lemma}

Given a sequence $[a_0,b_0],\dots,[a_{n-1},b_{n-1}]$ of subintervals of $[0,1]$, consider the function
\[
  g(x) = \sum_{\substack{\varnothing \neq I \subseteq \{0,\dots,n-1\} \\ I~\text{decidable}}}(-1)^{|I| + 1} \max\left\{0, \min\{x,\min_{i \in I}b_i\} -\max_{i \in I}a_i \right\}.
\]
Our goal is to show that this satisfies the requirements of Lemma~\ref{gConstruction}. 

First note that clearly $g(0) = 0$. Also note that $g(x)$ is clearly $(2^n-1)$-Lipschitz. 

Fix a natural number $M$ and find intervals $[a'_i,b'_i]$ such that
\begin{itemize}
  \item $a'_i$ and $b'_i$ are of the form $\frac{k}{M}$ for $k \leq M$ and
  \item $|a_i - a'_i| < \frac{2}{M}$ and $|b_i - b'_i| < \frac{2}{M}$ for each $i < n$.
\end{itemize}
Let $g'(x)$ be defined in the same way as $g(x)$ with $a'_i$ in place of $a_i$ and $b'_i$ in place of $b_i$. Note that for any $x \in [0,1]$,
\[
  |g(x) - g'(x)| < (2^n - 1)\cdot 2 \cdot \frac{2}{M} = \frac{4(2^n-1)}{M}.
\]

By the inclusion-exclusion principle we have for each $k \leq M$ that $g'(\frac{k}{M})$ is equal to the number of indices $\ell < k$ such that $[\frac{\ell}{M}, \frac{\ell+1}{M}] \subseteq \bigcup_{i < n}[a'_i,b'_i]$.

Let $J = \{0,\frac{1}{M},\dots,\frac{M-1}{M},1\}$. We have the following:
\begin{itemize}
  \item For any $x,y \in J$ with $x \leq y$, $g'(x) \leq g'(y)$.
  \item For any $x,y \in J$, $|g'(x)-g'(y)| \leq |x - y|$.
  \item For any $i < n$ and $x,y \in J$ with $a_i + \frac{2}{M} < x \leq y < b_i - \frac{2}{M}$, $g'(y) - g'(x) = y - x$.
  \item For any $x,y \in J$ with $x \leq y$, if every point in $[x,y]$ has distance greater than $\frac{2}{M}$ from every point in $\bigcup_{i < n}[a_i,b_i]$, then $g'(x) = g'(y) = g'(z)$ for any $z \in [x,y] \cap J$.
\end{itemize}

Since we can do this for arbitrarily large $M$ and since $g(x)$ is $(2^n-1)$-Lipschitz, the first two bullet points above imply that $g(x)$ is non-decreasing and $1$-Lipschitz. The third bullet point above likewise implies that for any $i < n$ and $x,y$ with $a_i < x \leq y < b_i$, $g(y) - g(x) = y - x$. By continuity, this implies the same only assuming $a_i \leq x \leq y \leq b_i$. Finally the fourth bullet point above implies that $g(x)$ is constant on any open interval disjoint from $\bigcup_{i < n}[a_i,b_i]$.

Now to see that the conditions listed in Lemma~\ref{gConstruction} uniquely specify $g(x)$, assume that $h(x)$ satisfies the same conditions. This implies that $h(x) - g(x)$ is constant on any open interval contained in or disjoint from $\bigcup_{i < n}[a_i,b_i]$. For any $\e > 0$, there exists a sequence of open intervals $(c_0,d_0),\dots,(c_{k-1},d_{k-1})$ such that
\begin{itemize}
  \item $\bigcup_{i < n} \{a_i,b_i\} \subseteq \bigcup_{j < k}(c_j,d_j)$,
  \item $c_0 < 0 < d_0$ and $c_{k-1} < 1 < d_{k-1}$,
  \item $\sum_{j < k} (d_j - c_j) < \e$, and
  \item $d_j < c_{j+1}$ for each $j < k-1$.
\end{itemize}
The previous statements now imply that $h(x)-g(x)$ is constant on each interval $(d_j,c_{j+1})$. Since $h(x) - g(x)$ is $2$-Lipschitz, this implies that for all $x \in [0,1]$, $|h(x) - g(x)| \leq 2 \sum_{j < k} (d_j - c_j) < 2\e$. Since we can do this for any $\e > 0$, we have that $h(x) = g(x)$ for all $x \in [0,1]$.

\bibliographystyle{plain}
\bibliography{ref}


\end{document}